\documentclass{interact}
\usepackage[dvipsnames]{xcolor}

\usepackage{epstopdf}
\usepackage[caption=false]{subfig}

\usepackage[numbers,sort&compress]{natbib}

\usepackage[utf8]{inputenc}
\usepackage{lmodern}
\usepackage{amsmath} \usepackage{amssymb}
\usepackage{amsthm}
\newtheorem{theorem}{Theorem}[section]
\newtheorem{proposition}[theorem]{Proposition}
\newtheorem{corollary}[theorem]{Corollary}
\theoremstyle{definition}
\newtheorem{definition}[theorem]{Definition}
\newtheorem{remark}[theorem]{Remark}
\newtheorem*{init}{Initialization}
\newtheorem*{mainloop}{Update Step}
\newtheorem*{vs}{Vertex Selection}
\newtheorem*{vs_efficient}{Efficient Implementation of the Vertex Selection}
\newtheorem{example}{Example}[section]

\newtheorem*{Assumptions}{Assumptions}

\DeclareMathOperator{\interior}{int}
\DeclareMathOperator{\cl}{cl}
\DeclareMathOperator{\conv}{conv}
\DeclareMathOperator{\cone}{cone}

\DeclareMathOperator{\relint}{ri}
\DeclareMathOperator{\dom}{dom}
\DeclareMathOperator{\rank}{rank}

\DeclareMathOperator{\vertices}{vert}

\newcommand{\vrtxslct}{vertex~selection}
\newcommand{\I}{\mathcal{I}}
\renewcommand{\O}{\mathcal{O}}
\newcommand{\isassigned}{\leftarrow}
\newcommand{\href}[2]{{#2}}
\usepackage{color}
\usepackage{enumitem}
\usepackage[misc]{ifsym}
\usepackage{ifthen}
\usepackage{longtable}
\usepackage{booktabs}
\usepackage{makecell}
\usepackage[linesnumbered,algoruled,vlined]{algorithm2e}
\newcommand{\cell}[2]{\begin{tabular}{#1} #2 \end{tabular}} 
\usepackage{mathtools}
\newcommand{\R}{\mathbb{R}}
\renewcommand{\P}{\mathcal{P}}

\newcommand{\T}{\mathsf{T}}
\newcommand{\haus}[2]{d_{\mathsf{H}}(#1,#2)}
\newcommand{\decoRule}{\rule{.8\textwidth}{.4pt}}
\newcommand{\norm}[1]{\left\lVert {#1} \right\rVert}
\usepackage{pifont}
\newcommand{\cmark}{\textcolor{OliveGreen}{\ding{51}}}
\newcommand{\xmark}{\textcolor{Mahogany}{\ding{55}}}

\begin{document}


\title{A Benson-Type Algorithm for Bounded Convex Vector Optimization Problems with Vertex Selection}

\author{ \name{Daniel Dörfler\textsuperscript{a}\thanks{CONTACT
D. Dörfler. \Letter \ \href{}{daniel.doerfler@uni-jena.de}, \protect
\includegraphics[keepaspectratio, width=0.8em]{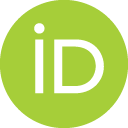}
\href{}{https://orcid.org/0000-0002-9503-3619}}, Andreas
Löhne\textsuperscript{a}, Christopher Schneider\textsuperscript{b} and
Benjamin Weißing\textsuperscript{a}} \affil{\textsuperscript{a}
Friedrich Schiller University Jena, Germany; \textsuperscript{b} Ernst
Abbe University of Applied Sciences Jena, Germany} }

\maketitle

\begin{abstract}
  We present an algorithm for approximately solving bounded convex
  vector optimization problems. The algorithm provides both an outer
  and an inner polyhedral approximation of the upper image. It is a
  modification of the primal algorithm presented by Löhne, Rudloff,
  and Ulus in 2014. There, vertices of an already known outer
  approximation are successively cut off to improve the approximation
  error. We propose a new and efficient selection rule for deciding
  which vertex to cut off. Numerical examples are provided which
  illustrate that this method may solve fewer scalar problems overall
  and therefore may be faster while achieving the same approximation
  quality.
\end{abstract}

\begin{keywords}
  Vector optimization, multiple objective optimization, polyhedral
  approximation, convex programming, algorithms
\end{keywords}

\begin{amscode}
  90C29, 90C25, 90-08, 90C59
\end{amscode}

\section{Introduction}
There exists a variety of methods for (approximately) solving vector
optimization problems. One of the most studied and best understood
class is vector linear programming (VLP). There are numerous
algorithms for VLP as surveyed by \citeauthor{Ehr05} in
\cite{Ehr05}. These include the multiple objective simplex
method, where the set of all efficient solutions is computed in the
preimage space (or variable space) of the problem. In
\cite{Ben98}, an article from \citeyear{Ben98},
\citeauthor{Ben98} proposes an approximation algorithm that
computes the set of all efficient values by constructing a sequence of
outer approximation polyhedra in the image space (or objective
space). This is motivated by the idea that a decision maker tends to
choose a solution based on objective function values rather than
variable values, many efficient solutions may be mapped to the same
efficient point and the dimension of the image space is typically much
smaller than that of the preimage space. Although algorithms taking
these considerations into account are frequently named after Benson,
some of his ideas can be traced back to earlier works in different
areas of research. In \cite{Dau86} from \citeyear{Dau86},
\citeauthor{Dau86} analyzes the image space in VLP and observes that
the number of objectives is typically smaller than the number of
variables. More than 15 years prior to \citeauthor{Ben98}'s
article the idea of approximation polyhedra had been used in global
optimization, compare \cite{Vei67,Tuy83,Thi83}. The ideas applied in
these works can in turn be dated back to \citeauthor{Che59}
\cite{Che59} from \citeyear{Che59} and \citeauthor{Kel60} \cite{Kel60}
from \citeyear{Kel60}, who use cutting plane methods to solve convex
programs. In \cite{Las90} from \citeyear{Las90} \citeauthor{Las90}
propose an algorithm for computing projections of polyhedra by
successive refinements of approximations.  Their approach can be
viewed as a dual variant of the outer polyhedral approximation
algorithm (also compare \cite{Ehr12}). In \cite{Kam92}
\citeauthor{Kam92} formulates a framework for the approximation of
convex bodies by polyhedra. In this article from \citeyear{Kam92}, the
same ideas as in \citeauthor{Ben98}'s algorithm are used
already. An adequate solution concept for VLP based on the image space
approach is presented in \cite{Hey11,Loe11}. Various
modifications of \citeauthor{Ben98}'s algorithm for VLP have
since been developed, see
e.g. \cite{Sha08_a,Sha08_b,Loe11,Ehr12}. Improvements
of these methods where fewer LPs have to be solved per iteration are
presented in \cite{Ham14,Csi16}.

Naturally, there has been effort to extend Benson's algorithm from VLP
to the more general class of vector convex programming (VCP) or convex
vector optimization problems (CVOPs). Therefore solution concepts have
been refined to adapt to approximate solutions, see the survey article
by \citeauthor{Ruz05} \cite{Ruz05}. However, a finite
description of an approximate solution in terms of points and
directions may not be possible for an unbounded problem, see
\cite{Ulu18}. For example, the epigraph of a parabola can not be
approximated by a polyhedron, i.e. their Hausdorff distance is always
infinite. In \citeyear{Ehr11},
\citeauthor{Ehr11} \cite{Ehr11}
propose an approximation algorithm for bounded VCP motivated by
\citeauthor{Ben98}'s arguments for VLP. In \cite{Loe14} the
authors develop an algorithm that generalizes and simplifies this
approach. In particular, their method allows the use of (1) not
necessarily differentiable objective and constraint functions, (2)
more general ordering cones, and is simpler in the sense that (3) only
one convex program has to be solved in every iteration throughout the
algorithm. Moreover, a dual variant of the algorithm is provided.

In this paper we present a modification of the primal algorithm from
\cite{Loe14}. It computes sequences of polyhedral inner and outer
approximations of the upper image. In every iteration one vertex of
the outer approximation is cut off to refine the approximation
error. This requires solving one scalarization that is a convex
program in which the vertex is passed as a parameter. In \cite{Loe14}
this vertex is chosen arbitrarily. Here, we choose this vertex
according to a specific heuristic which takes into consideration the
Hausdorff distance between the current inner and outer
approximations. This rule requires to solve convex quadratic
subproblems. They differ from the scalarizations in the sense that the
variables come from the (typically lower dimensional) image space of
the vector program rather than the preimage space. Therefore, solving
the subproblems is typically cheaper than solving a
scalarization. Moreover, we show that not all subproblems have to be
solved. Instead, optimality of solutions known from prior iterations
can be verified by checking a single inequality. One advantage of this
selection rule is that the approximation error is known at every time
throughout the algorithm at no additional cost, whereas in
\cite{Loe14} it is only known either at termination or after solving a
number of scalarizations whose quantity typically increases with every
iteration. We provide three examples comparing the method presented
here with the original algorithm and illustrate its advantages. The
first one is an academic example where the modification's performance
is not affected by a certain problem parameter, whereas the original
algorithm's runtime increases with the value of the parameter. In the
second example we apply the method to the problem of regularization
parameter tracking in machine learning. This has first been done by
the authors of \cite{Gie19}. The last example concerns a real world
problem from mechanical engineering. We use the algorithm presented
here to analyze a truss design and find optimal distributions of loads
among the trusses' beam connections. In all examples fewer
scalarizations need to be solved with the modification. This leads to
(1) a decrease in runtime and (2) a smaller solution set while
achieving the same approximation quality, which is preferred by
decision makers as the amount of alternatives to choose from is less
overwhelming.

This paper is organized as follows. In Section \ref{sec:2} the
necessary notation is provided along with basic concepts. Section
\ref{sec:3} is dedicated to the problem formulation and the
theoretical background of VCP. A solution concept and scalarization
techniques are presented. The vertex selection along with the modified
version of the primal algorithm from \citep{Loe14} are presented
in Section \ref{sec:4}, correctness is proven, and a method for an
efficient implementation is discussed. Numerical examples are provided
in Section \ref{sec:5}.

\section{Preliminaries}\label{sec:2}
Given a set $A \subseteq \R^q$, we denote by $\cl A$, $\interior A$,
$\relint A$, $\conv A$, $\cone A$ the closure, interior, relative
interior, convex hull, and conic hull of $A$, respectively. We recall
that every polyhedral set $A$ can be written as the intersection of
finitely many closed halfspaces, i.e.
\begin{equation}\label{eq:hrep}
  A=\bigcap_{i=1}^\ell\{x \in \R^q \mid w_i^\T x \geqslant \gamma_i\}
\end{equation}
for $\ell \in \mathbb{N}$, $w_i \in \R^q$, $\gamma_i \in \R$ for all
$i=1,\dots,\ell$. A set ${\{(w_i,\gamma_i) \mid i=1,\dots,\ell\}}$ of
parameters fulfilling \eqref{eq:hrep} is called
\emph{$H$-representation of $A$}. Equivalently, $A$ can be expressed
as
\begin{equation}\label{eq:2}
  A=\conv\{v^1,\dots,v^s\} + \cone\{d^1,\dots,d^r\}
\end{equation}
for $s \in \mathbb{N}$, $r \in \mathbb{N}_0$, $v^i \in \R^q$, and
$d^i \in \R^q\setminus\{0\}$, that is the Minkowski sum of the convex
hull of finitely many points and the conic hull of finitely many
directions. We set $\cone \emptyset = \{0\}$. The data
$(\{v^1,\dots,v^s\},\{d^1,\dots,d^r\})$ from Equation \eqref{eq:2} are
called a~\emph{\mbox{$V$-representation} of $A$}. When expressing $A$
by $V$-representation, we will interchangeably write
$A=\conv V + \cone D$ for matrices $V \in \R^{q \times s}$ and
$D \in \R^{q \times r}$ where the columns of~$V$ and $D$ are the $v^i$
and $d^i$ in \eqref{eq:2}, respectively. A pointed convex cone
$C \subseteq \R^q$ induces a partial order $\leqslant_C$ on $\R^q$ by
$$x\leqslant_Cy \text{ if and only if } y-x \in C.$$ The nonnegative orthant of $\R^q$
is denoted by $\R^q_+$ and induces the natural (or component-wise)
order on $\R^q$ which we denote by $\leqslant$ rather than by
$\leqslant_{\R^q_+}$. The dual cone $C^+$ of~$C$ is the set
$C^+:=\{y \in \R^q \mid \forall x \in C \colon y^\T x \geqslant
0\}$. We call $C$ \emph{polyhedral} if there is a
matrix~${D \in \R^{q \times r}}$, such that
${C = \cone D := \{D\mu \mid \mu \geqslant 0\}}$. We summarize some
important facts about the set~$C=\cone D$ \cite[see][]{Kai11, Gre84}:
\begin{enumerate}[label=(\arabic*)]
\item There is a matrix $Z \in \R^{q \times \ell}$ such that $C = \{x
  \in \R^q \mid Z^\T x \geqslant 0\}$. In particular, $C^+ = \cone Z$.\label{pc:1}
\item $C = (C^+)^+$. \label{pc:2}
\item $C$ is pointed if and only if $\rank Z = q$.
\item $\interior C = \{x \in \R^q \mid Z^\T x > 0\}$.
\end{enumerate}
From \ref{pc:1} we obtain that $x\leqslant_Cy$ if and only if
$Z^\T x \leqslant Z^\T y$ for $x,y \in \R^q$. For a
set~${A \subseteq \R^q}$ and a pointed convex cone $C \subseteq \R^q$
an element $x \in A$ is called $C$\textit{-minimal} if
$(\{x\}-C\setminus\{0\}) \cap A = \emptyset$ and, if
$\interior C \neq \emptyset$, $x \in A$ is called \textit{weakly}
$C$\textit{-minimal} if~${(\{x\}-\interior C) \cap A =
  \emptyset}$. Given nonempty sets $A,B \subseteq \R^q$ we denote by~
$\haus{A}{B}$ the \textit{Hausdorff distance} between $A$ and $B$
which is defined as
\begin{equation}
  \haus{A}{B} := \max\left\lbrace\adjustlimits\sup_{a \in A}\inf_{b
    \in B} \norm{a-b}, \adjustlimits\sup_{b \in B}
  \inf_{a \in A} \norm{a-b} \right\rbrace,
\end{equation}
where $\norm{\cdot}$ denotes the euclidean norm in $\R^q$. It is well
known that $\haus{\cdot}{\cdot}$ defines a metric on the space of
nonempty compact subsets of $\R^q$. The Hausdorff distance between
arbitrary sets may be infinite. It holds true, however, that for
nonempty compact sets~${A,B \subseteq \R^q}$ and convex cones
$C^1,C^2 \subseteq \R^q$ the value of $\haus{A+C^1}{B+C^2}$ is finite
if and only if $\cl C^1 = \cl C^2$. Moreover, if $A$ and $B$ are
polyhedra with the same pointed recession cone one has
\begin{equation}\label{eq:4}
  \haus{A}{B} = \max \left\lbrace \adjustlimits\max_{a \in \vertices A} \min_{b \in
    B} \norm{a-b}, \adjustlimits\max_{b \in \vertices B} \min_{a
    \in A} \norm{a-b} \right\rbrace,
\end{equation}
where $\vertices A$ and $\vertices B$ denote the set of vertices of
$A$ and $B$, respectively. For proofs of the above statements we refer
the reader to \cite{Bat86}. The domain of an extended real-valued
function $g \colon \R^q \to \R \cup \{\infty\}$ is written as
$\dom g$. Given a function $f \colon \R^n \to \R^q$ and a pointed
convex cone $C \subseteq \R^q$, $f$ is called $C$\textit{-convex} if
for $x,y \in \R^n$ and $\lambda \in [0,1]$ it holds
\begin{equation}
  f(\lambda x + (1-\lambda) y) \leqslant_C \lambda f(x) + (1-\lambda) f(y).
\end{equation}

\section{Vector Convex Programs}\label{sec:3}
A vector convex program (VCP) is given as
\begin{equation}\label{P}
  \min \; F(x) \; \text{w.r.t} \; \leqslant_C \; \text{s.t.} \; g(x)
  \leqslant 0,\tag{P}
\end{equation}
where $F \colon X \to \R^q$ is a $C$-convex function, in particular,
$X \subseteq \R^n$ is a convex set and~${C \subseteq \R^q}$ is a
pointed convex cone. The constraint function is given as~
${g=(g_1,\dots,g_m)^\T}$, where for every $i=1,\dots,m$ the component~
${g_i\colon\R^n \to \R\cup\{\infty\}}$ is a convex function. We set
$\dom g := \bigcap_{i=1}^m \dom g_i$. Hence, $g$ is an $\R^m_+$-convex
(component-wise convex) function. The feasible set of (\ref{P}) is
denoted by $S$, i.e.~${S=\{x \in X \mid g(x)\leqslant 0\}}$ and its
image under $F$ by $F[S]$. Throughout this article we make the
following additional assumptions about \eqref{P}:
\begin{Assumptions}~
  \begin{enumerate}[label=(A\arabic*)]
  \item The objective function $F\colon X \to \R^q$ is continuous.\label{ass:A1}
  \item The constraint functions $g_i\colon \R^n \to \R\cup\{\infty\}$,
    $i=1,\dots,m$, are proper, lower semi-continuous, and their domains
    are relatively open.\label{ass:A2}
  \item $\bigcap_{i=1}^m \relint \{x \in X\mid g_i(x)\leqslant 0\} \neq
    \emptyset$\label{ass:A3}
  \item The feasible region $S$ of \eqref{P} is bounded.\label{ass:A4}
  \item The cone $C$ has nonempty interior and is given as ${C = \{x
      \in \R^q \mid Z^\T x \geqslant 0\}}$.\label{ass:A5}
  \end{enumerate}
\end{Assumptions}
\begin{definition}
  Given a VCP (\ref{P}) the set
  \begin{equation}
    \P := \cl(F[S]+C)
  \end{equation}
  is called the \textit{upper image of} (\ref{P}). We say that
  (\ref{P}) is \textit{bounded} if there exists some $y \in \R^q$ such
  that $\P \subseteq \{y\}+C$.
\end{definition}
Clearly, $\P$ is a closed and convex set.
\begin{definition}\label{def:sol}
  A point $x \in S$ is called a (\textit{weak}) \textit{minimizer for}
  (\ref{P}) if $F(x)$ is a (weakly)~$C$-minimal element of $F[S]$. A
  nonempty subset $\mathcal{X} \subseteq S$ is called an
  \textit{infimizer of} (\ref{P}) if $\cl\conv
  (F[\mathcal{X}]+C)=\P$. An infimizer $\mathcal{X} \subseteq S$ is
  called a (\textit{weak}) \textit{solution of} (\ref{P}) if it
  consists of (weak) minimizers only.
\end{definition}
This type of solution concept is introduced and studied in
\cite{Hey11} where Definition \ref{def:sol} is called a \emph{mild
  solution}.  It has been adapted to the case of VLP in
\cite{Loe11} where one is interested in finite solutions
consisting of minimal points and directions. The solution concept is
extended to finite approximate solutions for bounded VCPs in
\citep{Loe14}.
\begin{definition}\label{def:3}
  A nonempty finite subset $\mathcal{X} \subseteq S$ is called an
  $\varepsilon$\textit{-infimizer} for a bounded problem (\ref{P}) if
  \begin{equation}\label{eq:6}
    \haus{\conv F[\mathcal{X}]+C}{\P} \leqslant \varepsilon.
  \end{equation}
  A finite $\varepsilon$-infimizer $\mathcal{X} \subseteq S$ is called
  a (\textit{weak}) $\varepsilon$\textit{-solution of} (\ref{P}) if it
  consists of (weak) minimizers only.
\end{definition}
An illustration of the definition can be seen in Figure \ref{fig:1}.
\begin{figure}
\centering
\includegraphics{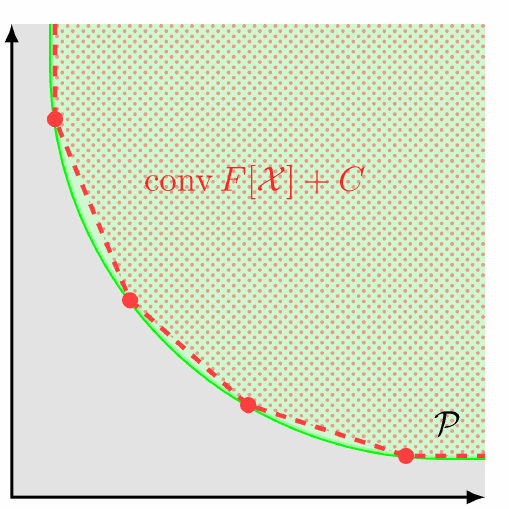}
\decoRule
\caption{\label{fig:1}%
  Illustration of an $\varepsilon$-solution ${\mathcal X}\subseteq S$
  (see Definition~\ref{def:3}).  The four points of $F[\mathcal{X}]$
  are $C$-minimal in~$\P$, hence ${\mathcal X}$ is a set of
  minimizers.  The Hausdorff~distance between ${\conv F[\mathcal{X}]
    +C}$ and $\P$ is $\varepsilon$.%
}
\end{figure}
\begin{remark}
  The original definition of an $\varepsilon$-infimizer given in
  \cite{Loe14} is a different one. There, condition \eqref{eq:6}
  is replaced by
  \begin{equation}\label{eq:7}
    \conv F[\mathcal{X}]+C-\varepsilon\{c\} \supseteq \P
  \end{equation}
  for some fixed direction $c \in \interior C$. Clearly, if
  \eqref{eq:7} holds one has \begin{equation} \haus{\conv
      F[\mathcal{X}]+C}{\P} \leqslant \varepsilon\left\lVert c
    \right\rVert.
  \end{equation}
  Since $C$ is a cone we could choose $c \in \interior C$ such that
  $\left\lVert c \right\rVert =1$. Then \eqref{eq:7} implies that~
  $\mathcal{X}$ is an $\varepsilon$-infimizer in the sense of
  Definition \ref{def:3}. The converse is also true up to a constant:
  \begin{proposition}
    Let $\mathcal{X} \subseteq S$ be an $\varepsilon$-infimizer for a
    bounded problem \eqref{P} according to Definition \ref{def:3}
    and let $C$ be closed. Then for every
    $c \in \interior{C}$ with $\norm{c} = 1$ and every~${k \geqslant
      (\min \{w^\T c \mid w \in C^+, \norm{w} = 1\})^{-1}}$ it holds that
      \[
        \conv{F[\mathcal{X}]} + C - k\varepsilon\{c\} \supseteq \P.
      \]      
  \end{proposition}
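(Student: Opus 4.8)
I would prove the inclusion by reducing everything to the dual description $C=(C^{+})^{+}$ together with a single Cauchy--Schwarz estimate. Write $\mathcal{I}:=\conv F[\mathcal{X}]+C$ for the inner approximation. The plan is to first record two elementary facts: (i) $\mathcal{I}\subseteq\P$, because $F[\mathcal{X}]\subseteq F[S]\subseteq\P$, $\P$ is closed and convex, and $\P+C=\P$; and (ii) $\mathcal{I}$ is closed, since $\mathcal{X}$ is finite, so $\conv F[\mathcal{X}]$ is a polytope (hence compact), and the Minkowski sum of a compact set with the closed cone $C$ is closed. Because of (i), the hypothesis $\haus{\mathcal{I}}{\P}\leqslant\varepsilon$ collapses to the one-sided statement $\sup_{p\in\P}\inf_{y\in\mathcal{I}}\norm{p-y}\leqslant\varepsilon$, and by (ii) the inner infimum is attained. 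Thus for every $p\in\P$ there is a $y\in\mathcal{I}$ with $\norm{p-y}\leqslant\varepsilon$.

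Next I would fix such a pair $p\in\P$, $y\in\mathcal{I}$ and show that $p+k\varepsilon\{c\}\subseteq\mathcal{I}$, i.e.\ $p+k\varepsilon c\in\mathcal{I}$. Since $y\in\mathcal{I}$ and $\mathcal{I}+C=\mathcal{I}$, it suffices to prove $p+k\varepsilon c-y\in C$. By fact \ref{pc:2} we have $C=(C^{+})^{+}$, so this is equivalent to $w^{\T}(p+k\varepsilon c-y)\geqslant 0$ for every $w\in C^{+}$. The inequality is trivial for $w=0$, and since $C^{+}$ is a cone, for general $w\neq 0$ it follows from the normalized case; hence fix $w\in C^{+}$ with $\norm{w}=1$. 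By Cauchy--Schwarz and $\norm{p-y}\leqslant\varepsilon$,
\[
  w^{\T}(p+k\varepsilon c-y)=w^{\T}(p-y)+k\varepsilon\,w^{\T}c\geqslant -\varepsilon + k\varepsilon\,w^{\T}c = \varepsilon\bigl(k\,w^{\T}c-1\bigr).
\]

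Finally I would show that the right-hand side is nonnegative. Set $\mu:=\min\{w^{\T}c\mid w\in C^{+},\ \norm{w}=1\}$. The set $\{w\in C^{+}\mid \norm{w}=1\}$ is compact because $C^{+}$ is closed, so the minimum is attained, and it is strictly positive because $c\in\interior C$ forces $w^{\T}c>0$ for every nonzero $w\in C^{+}$. Hence $w^{\T}c\geqslant\mu>0$, and the assumption $k\geqslant\mu^{-1}$ yields $k\,w^{\T}c-1\geqslant 0$, so $w^{\T}(p+k\varepsilon c-y)\geqslant 0$. As $w$ ranged over all unit vectors of $C^{+}$, we get $p+k\varepsilon c-y\in C$, hence $p+k\varepsilon c\in\mathcal{I}$, i.e.\ $p\in\mathcal{I}-k\varepsilon\{c\}$. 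Since $p\in\P$ was arbitrary, $\P\subseteq\conv F[\mathcal{X}]+C-k\varepsilon\{c\}$.

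The only genuinely delicate point I anticipate is the claim $\mu>0$: it rests on the standard polarity fact that an interior point of $C$ pairs strictly positively with every nonzero element of $C^{+}$, combined with compactness of the unit sphere intersected with the closed cone $C^{+}$. Everything else is bookkeeping. One could even sidestep the compactness argument by using fact \ref{pc:1}: the finitely many columns of $Z$ generate $C^{+}$, so $\mu$ can be taken as the minimum of $z^{\T}c/\norm{z}$ over the finite set of columns $z$ of $Z$, each term being positive; this also makes $\mu$ (and hence an admissible $k$) explicitly computable.
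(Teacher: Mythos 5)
Your proof is correct, and it takes a genuinely different route from the paper's. The paper writes $\conv F[\mathcal{X}]+C$ as an intersection of closed halfspaces with unit normals in $C^+$ (Rockafellar, Theorem~18.8), shoots the ray $p+t\varepsilon c$ from each $p\in\P$, identifies a binding halfspace at the entry parameter $k_p$, expresses $k_p$ through the distance $d$ from $p$ to that hyperplane, and bounds $d\leqslant\varepsilon$ by a Cauchy--Schwarz contradiction. You instead pick a nearest point $y\in\conv F[\mathcal{X}]+C$ to $p$ --- the one-sidedness of the Hausdorff bound and the closedness of the inner set are exactly the two facts you record for this --- and verify $p+k\varepsilon c-y\in C$ directly through the bipolar identity $C=(C^+)^+$ and a single Cauchy--Schwarz estimate. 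Both arguments pivot on the same constant $\mu=\min\{w^\T c\mid w\in C^+,\ \norm{w}=1\}>0$ and on the polarity fact that $c\in\interior C$ pairs strictly positively with every nonzero element of $C^+$. What your version buys: it needs no halfspace representation at all, and in particular it bypasses the existence of a binding index $j$ with $w_j^\T(p+k_p\varepsilon c)=\gamma_j$, a step which for the possibly infinite index set $I$ in the paper's representation is not entirely immediate. What the paper's version buys is the sharper pointwise quantity $k_p=d(\varepsilon w_j^\T c)^{-1}$, which locates where the worst case occurs. One small caveat on your closing remark: reading $\mu$ off the columns of $Z$ presupposes the polyhedrality of $C$ from Assumption~(A5), whereas the proposition only assumes $C$ closed; your main argument, which uses compactness of $\{w\in C^+\mid\norm{w}=1\}$, is the one that matches the stated generality.
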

  \begin{proof}
    Since $\conv F[\mathcal{X}]+C$ is non-empty, closed and convex, it
    can be written as an intersection of closed halfspaces
    \cite[see][Theorem 18.8]{Roc70}, i.e.
    \[
      \conv F[\mathcal{X}]+C = \bigcap_{i \in I} \{y \in \R^q \mid
      w_i^\T y \geqslant \gamma_i\}
    \]
    for $w_i \in \R^q\setminus\{0\}$, $\norm{w_i}=1$,
    $\gamma_i \in \R$, and some index set $I$. Because the recession
    cone of $\conv F[\mathcal{X}]+C$ is $C$, we have $w_i \in C^+$ for
    all $i \in I$. Therefore~${w_i^\T c > 0}$ for all $i \in I$
    \cite[][p. 64]{Boy04} and
    ${k_p := \inf\{t \geqslant 1 \mid p+t\varepsilon c \in \conv
      F[\mathcal{X}]+C\}}$ exists for all~${p \in \P}$. It remains to
    show that
    ${(\min \{w^\T c \mid w \in C^+, \norm{w} = 1\})^{-1} \geqslant
      \sup\{k_p \mid p \in \P\}}$. Therefore, let $p \in \P$ such that
    $k_p > 1$. If no such $p$ exists we are done,
    because~${(\min \{w^\T c \mid w \in C^+, \norm{w} = 1\})^{-1} \in
      [1,\infty)}$. Otherwise there exists $j \in I$ such that
    $w_j^\T (p+k_p\varepsilon c) = \gamma_j$.  Denote by $d$ the
    euclidean distance from $p$ to the hyperplane defined by
    $(w_j,\gamma_j)$, i.e.\ $d = \gamma_j - w_j^\T p$. Then we obtain
    $k_p=d(\varepsilon w_j^\T c)^{-1}$.  Next, observe that
    $d \leqslant \varepsilon$: Because
    $\haus{\conv F[\mathcal{X}]+C}{\P} \leqslant \varepsilon$, there
    exists a direction $u \in \R^q$ with
    $\norm{u} \leqslant \varepsilon$ such that
    $p+u \in \conv F[\mathcal{X}]+C$. Assuming~${d > \varepsilon}$
    yields~${w_j^\T (p+\norm{u}w_j) < \gamma_j \leqslant w_j^\T
      (p+u)}$. Therefore $w_j^\T u > \norm{u}$, which is a
    contradiction to the Cauchy-Schwarz inequality. Hence, we have
    \[
      k_p \leqslant \frac{1}{w_j^\T c} \leqslant \frac{1}{\min_{i \in I}
        w_i^\T c} \leqslant \frac{1}{\min_{w \in C^+, \norm{w}=1} w^\T
        c}
    \]
    which completes the proof.
  \end{proof}
  Note that the closedness of $C$ can be omitted if the inequality in
  the statement is turned strict. We use Definition \ref{def:3} in
  this article, because it has the advantage of being independent of
  any directions.
\end{remark}
Assumptions \ref{ass:A1}, \ref{ass:A2}, \ref{ass:A4}, and \ref{ass:A5}
imply that \eqref{P} is bounded: By \cite[][Theorem 7.1]{Roc70} the
sets $\{x \in X \mid g_i(x) \leqslant 0\}$ are closed for all
$i=1,\dots,m$ by lower semi-continuity. Therefore
$S = \bigcap_{i=1}^m \{x \in X\mid g_i(x) \leqslant 0\}$ is closed and
compact by \ref{ass:A4}. Now, since~$F$ is continuous by \ref{ass:A1},
$F[S]$ is compact as well. Finally,
because~${\interior C \neq\emptyset}$, there is some $y \in \R^q$ such
that $\P\subseteq\{y\}+C$. Moreover, Assumption \ref{ass:A2} implies
that \cite[see][Corollary 7.6.1]{Roc70}
${\relint \{x \in X \mid g_i(x) \leqslant 0\} = \{x \in X \mid g_i(x)
  < 0\}}$ for $i=1,\dots,m$ and Assumption \ref{ass:A3} implies that
\cite[see][Theorem 6.5]{Roc70}
\[
  \bigcap_{i=1}^m \relint\{x \in X\mid g_i(x) \leqslant 0\} = \relint
\bigcap_{i=1}^m\{x\in X\mid g_i(x)\leqslant 0\}.
\]
Therefore it holds
\begin{equation}\label{relintS}
  \relint S = \{x\in X\mid g(x) < 0\}
\end{equation}
and the set is nonempty.

For some parameter $w \in \R^q$ the problem
\begin{equation}\label{P1}
  \begin{aligned}
    \min \quad & w^\T F(x)\\
    \text{s.t.} \quad & g(x) \leqslant 0
  \end{aligned}
  \tag{P$_1$($w$)}
\end{equation}
is the well-known \emph{weighted sum scalarization of} (\ref{P}). By
Assumption \ref{ass:A1} and compactness of $S$ an optimal solution of
(\ref{P1}) exists for every $w \in \R^q$. The following is a common
result, see e.g. \cite{Jah84,Luc87}.
\begin{proposition}\label{prop:p1}
  Let $w \in C^+ \setminus \{0\}$. An optimal solution $x^w$ of
  \emph{(\ref{P1})} is a weak minimizer of \emph{(\ref{P})}.
\end{proposition}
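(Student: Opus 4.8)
The plan is to unwind the definition of a weak minimizer and then contradict the optimality of $x^w$ in (\ref{P1}). By Definition~\ref{def:sol}, the point $x^w \in S$ is a weak minimizer exactly when $F(x^w)$ is weakly $C$-minimal in $F[S]$, that is, when $(\{F(x^w)\} - \interior C) \cap F[S] = \emptyset$; this notion is meaningful here since Assumption~\ref{ass:A5} guarantees $\interior C \neq \emptyset$. So I would argue by contradiction: suppose there is some $x \in S$ with $F(x) \in \{F(x^w)\} - \interior C$, i.e.\ $d := F(x^w) - F(x) \in \interior C$.

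The one genuine ingredient needed is that a nonzero element of the dual cone is strictly positive on the interior of the cone: if $w \in C^+ \setminus \{0\}$ and $d \in \interior C$, then $w^\T d > 0$. I would dispatch this in a line: since $d \in \interior C$, also $d - \varepsilon w \in C$ for all sufficiently small $\varepsilon > 0$, hence $0 \leqslant w^\T(d - \varepsilon w) = w^\T d - \varepsilon \norm{w}^2$, and keeping $\varepsilon$ positive while $w \neq 0$ forces $w^\T d > 0$. (Alternatively one may invoke $C = (C^+)^+$ from~\ref{pc:2}, or the halfspace characterisation in~\ref{pc:1}; this is the same estimate ``$w_i^\T c > 0$'' already used in the Remark preceding the proposition.)

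Applying this with the above $d$ yields $w^\T F(x^w) - w^\T F(x) = w^\T d > 0$, so $w^\T F(x) < w^\T F(x^w)$. But $x \in S$ is feasible for (\ref{P1}), so this contradicts the optimality of $x^w$. Hence no such $x$ exists, $F(x^w)$ is weakly $C$-minimal in $F[S]$, and $x^w$ is a weak minimizer of (\ref{P}).

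I do not expect a substantive obstacle; the argument is short. The only points requiring a moment's care are the auxiliary strict-positivity fact above and the bookkeeping that weak minimality is a statement about $F[S]$ rather than about the upper image $\P$ — though this is immaterial, since $\{F(x^w)\} - \interior C$ is open and $\interior C + C = \interior C$, so it meets $F[S]$ iff it meets $F[S]+C$ iff it meets $\P = \cl(F[S]+C)$, and any of the three formulations gives the same conclusion.
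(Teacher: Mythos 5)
Your proof is correct and is precisely the standard argument: the paper itself gives no proof of Proposition~\ref{prop:p1}, deferring to the cited literature, and the contradiction via the strict positivity $w^\T d>0$ for $w\in C^+\setminus\{0\}$, $d\in\interior C$ is exactly the classical proof found there. The auxiliary strict-positivity lemma and the remark that weak minimality over $F[S]$, $F[S]+C$, or $\P$ are interchangeable are both handled correctly; nothing further is needed.
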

We consider another scalarization \cite[see e.g.][]{Loe14,
  Ham14} that can be stated as
\begin{equation}\label{P2}
  \begin{aligned}
    \min \quad & z \\
    \text{s.t.} \quad & g(x) \leqslant 0, \\
    & Z^\T(F(x)-v-zc) \leqslant 0,
  \end{aligned}
  \tag{P$_2$($v,c$)}
\end{equation}
with a parameter $v \in \R^q$, that does typically not belong to $\P$,
and a direction $c \in \R^q$. The Lagrangian dual problem of
\eqref{P2} is given as
\begin{equation*}\label{D2}
  \begin{aligned}
    \max \quad & \inf_{x \in X \cap \dom g} \left\lbrace u^\T g(x) + w^\T F(x)
    \right\rbrace - w^\T v \\
    \text{s.t.} \quad & u \geqslant 0, \\
    & w^\T c = 1, \\
    & w \in C^+.
  \end{aligned}
  \tag{D$_2$($v,c$)}
\end{equation*}
The following primal-dual relationship between \eqref{P2} and
\eqref{D2} has been established in \cite[][Proposition
4.4]{Loe14} in a similar form. The proof is presented here due to
a flaw in the original work claiming that the feasible region of
\eqref{P2} is compact.
\begin{proposition}\label{prop:3.7}
  Let Assumptions {\normalfont\ref{ass:A1} -- \ref{ass:A5}} hold and
  let $p \in \interior\P$. Then for every~${v \in \R^q\setminus\P}$
  and $c := p-v$, solutions $(x^*,z^*)$ and $(u^*,w^*)$ to \eqref{P2}
  and~\eqref{D2}, respectively, exist and their optimal values
  coincide.
\end{proposition}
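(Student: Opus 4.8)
The plan is to establish three things in turn: (i) feasibility of \eqref{P2} and \eqref{D2}, (ii) finiteness of the optimal value of \eqref{P2}, and (iii) strong duality together with attainment on both sides. The technical subtlety that motivates re-proving the result is that the feasible region of \eqref{P2} need not be compact (the slack variable $z$ is unbounded above, and in general also below unless one argues carefully), so attainment of the primal optimum cannot be read off from Weierstrass directly.

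First I would set up the data: since $p\in\interior\P$ and $v\notin\P$, the direction $c=p-v$ is nonzero, and because $C=\{x\mid Z^\T x\geqslant 0\}$ has nonempty interior, one checks that $c\in\interior C$ is \emph{not} automatic — but what is needed is only that the ray $v+\R_+c$ eventually enters $\interior\P$; this holds by convexity of $\P$ since $v+0\cdot c=v\notin\P$ and $v+1\cdot c=p\in\interior\P$. Feasibility of \eqref{P2}: pick any $\bar x\in\relint S$ (nonempty by \eqref{relintS}); for $z$ large enough the constraint $Z^\T(F(\bar x)-v-zc)\leqslant 0$ is satisfied, because moving far along $c$ pushes $v+zc$ deep into the interior of $\P\supseteq\{F(\bar x)\}+C$, i.e. $F(\bar x)-v-zc\in -C$. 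Feasibility of \eqref{D2}: here is where Assumptions \ref{ass:A3} (Slater) and \ref{ass:A5} ($\interior C\neq\emptyset$, so $C^+$ is pointed with $c\notin -C$, hence there exists $w\in C^+$ with $w^\T c>0$, rescalable to $w^\T c=1$) are used; together with $u=0$ this gives a feasible dual point, and one must also argue the inner infimum over $x$ is finite for a suitable choice, which follows once strong duality is in hand or can be bootstrapped from boundedness of $\P$.

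Next, finiteness of the primal value. Lower boundedness of $z$ over the feasible set of \eqref{P2} is the crux: if $(x,z)$ is feasible then $F(x)\in v+zc+C$, hence $v+zc\in F(x)-C\subseteq\P-C$. Using boundedness of \eqref{P} (which, as the excerpt shows, follows from \ref{ass:A1},\ref{ass:A2},\ref{ass:A4},\ref{ass:A5}), there is $y$ with $\P\subseteq\{y\}+C$, and $F[S]$ is compact; combining these, $v+zc$ ranges in a set bounded below in the $c$-direction — concretely, pairing with a fixed $w\in C^+$, $\norm w=1$, $w^\T c=1$, feasibility gives $w^\T v+z\leqslant w^\T F(x)\leqslant\max_{x'\in S}w^\T F(x')<\infty$, so $z$ is bounded above, and pairing the other way (or using $\P\subseteq\{y\}+C$) bounds $z$ below. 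Thus the primal value is finite. I would then restrict \eqref{P2} to the slab $z\in[\underline z,\overline z]$ without changing the problem; on this slab, using compactness of $S$ and continuity of $F$, the feasible region is compact and nonempty, so by Weierstrass an optimal $(x^*,z^*)$ exists.

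Finally, strong duality and dual attainment. With primal feasibility, finiteness of the value, and the Slater condition \ref{ass:A3} (extended to cover the extra conic constraint — note $Z^\T(F(x)-v-zc)\leqslant 0$ can be made strict at an interior-feasible point by slightly increasing $z$, since $c$ points into $\interior C$ relative to the relevant direction), the standard convex-programming strong duality theorem (e.g. \cite[][Theorem 28.2]{Roc70} or the Slater-based version in \cite{Boy04}) yields zero duality gap and attainment of the dual optimum by some $(u^*,w^*)$. I expect the main obstacle to be the careful verification that a Slater point for \eqref{P2} exists \emph{including} the conic constraint in strict form — this is exactly the gap in the original argument — and pinning down the correct lower bound $\underline z$; both hinge on the geometric fact that the ray $v+\R_+c$ meets $\interior\P$, which is where $p\in\interior\P$ and $v\notin\P$ are essential. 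Everything else is a routine assembly of compactness (from \ref{ass:A4}, \ref{ass:A1}) and convex duality.
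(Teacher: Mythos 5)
Your overall architecture (exhibit a feasible point, restrict to a compact slab to obtain primal attainment, then invoke Slater for strong duality and dual attainment) is the same as the paper's, but the step where you produce a feasible --- let alone strictly feasible --- point for \eqref{P2} contains a genuine error. You pick an \emph{arbitrary} $\bar x\in\relint S$ and claim that for $z$ large enough $Z^\T(F(\bar x)-v-zc)\leqslant 0$, because moving far along $c$ pushes $v+zc$ ``deep into the interior of $\P$''. This fails twice over. First, the constraint requires $v+zc\in\{F(\bar x)\}+C$, not merely $v+zc\in\interior\P$. Second, the ray $v+\R_+c$ need not remain in $\P$ at all: take $C=\R^2_+$, $F[S]=\{0\}$, so $\P=\R^2_+$, and $p=(1,1)$, $v=(-1,2)$; then $c=(2,-1)$ and $v+zc=(-1+2z,\,2-z)$ leaves $\P$ for $z>2$, and the constraint $F(\bar x)-v-zc\leqslant 0$ holds only for $z\in[\tfrac12,2]$. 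The same example undercuts your later remark that the conic constraint ``can be made strict by slightly increasing $z$'': increasing $z$ makes components of $Z^\T(F(x)-v-zc)$ \emph{increase} whenever the corresponding row of $Z^\T c$ is negative, which is the generic situation since $c=p-v$ need not lie in $C$. The repair is to choose $x$ adapted to $p$ rather than arbitrarily: since $\interior\P=\relint F[S]+\interior C$ and $\relint F[S]\subseteq F[\relint S]$, write $p=F(x)+\bar c$ with $x\in\relint S$ and $\bar c\in\interior C$; then $Z^\T(F(x)-v-c)=-Z^\T\bar c<0$, so $(x,1)$ is strictly feasible (strictness in $g$ coming from \eqref{relintS}). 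This single point settles feasibility, supplies the upper bound $z\leqslant 1$ for the compact slab, and is the Slater point, all at once --- this is exactly what the paper does.

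Two smaller points. Your pairing inequality is reversed: for $w\in C^+$ with $w^\T c=1$ (such $w$ exists since $c\notin -C$, as you correctly note), feasibility gives $w^\T F(x)\leqslant w^\T v+z$, i.e.\ a \emph{lower} bound $z\geqslant \min_{x'\in S}w^\T F(x')-w^\T v$, which is the bound you actually need for the minimization; an upper bound on $z$ is free from any one feasible point. This lower bound is a legitimate alternative to the paper's argument that $z\leqslant 0$ is infeasible (if $v+zc\in\P$ for some $z\leqslant 0$, convexity of $\P$ together with $v+c=p\in\P$ would force $v\in\P$). With the feasible-point construction corrected, the rest of your plan goes through and essentially coincides with the paper's proof.
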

\begin{proof}
  By \cite[][Corollary 6.6.2]{Roc70} we have
  $\interior \P = \relint F[S] + \interior C$. Assumption \ref{ass:A1}
  and \cite[Theorem 6.6]{Roc70} yield that
  $\relint F[S] \subseteq F[\relint S]$. Therefore we can write
  $p \in \interior \P$ as~${p = F(x) + \bar{c}}$ for some
  $x \in \relint S$ and $\bar{c} \in \interior C$. From Assumption
  \ref{ass:A5} we conclude
  \begin{equation}\label{prop3.6_*}
    Z^\T(F(x)-v-c) = Z^\T(F(x)-p) = -Z^\T\bar{c} < 0.
    \tag{$*$}
  \end{equation}
  This implies that $(x,1)$ is feasible for \eqref{P2}. Since
  $v \notin \P$, the second constraint of~\eqref{P2} is violated
  whenever $z \leqslant 0$. From Assumptions \ref{ass:A1},
  \ref{ass:A2}, and \ref{ass:A4} it follows that the set
  \[
    \{(x,z) \in \R^{n+1} \mid g(x) \leqslant 0, Z^\T(F(x)-v-zc)
    \leqslant 0, z \leqslant 1\}
  \]
  is compact and nonempty. Thus there exists an optimal solution
  $(x^*,z^*)$ of \eqref{P2} by the extreme value theorem and one has
  $0 \leqslant z^* \leqslant 1$. Next, observe that $(x,1)$ is also
  strictly feasible for \eqref{P2} by Equations \eqref{relintS} and
  \eqref{prop3.6_*}. This is the well-known Slater's constraint
  qualification. Consequently strong duality holds, i.e. there exists
  an optimal solution $(u^*,w^*)$ of \eqref{D2} and the optimal values
  coincide.
\end{proof}
Similar to Proposition \ref{prop:p1} we obtain weak minimizers of
\eqref{P} from solutions of~\eqref{P2}. The following is Proposition
4.5 from \cite{Loe14}.
\begin{proposition}\label{prop:p2wmin}
  Let $(x^*,z^*)$ be a solution to \eqref{P2}. Then $x^*$ is a weak
  minimizer of \eqref{P} and $y:=v+z^*c$ is a weakly $C$-minimal
  element of $\P$.
\end{proposition}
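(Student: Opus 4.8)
The plan is to prove the two assertions in sequence: first that $y := v + z^*c$ is a weakly $C$-minimal element of $\P$, and then to derive from this that $x^*$ is a weak minimizer of \eqref{P}. The starting observation is that the second constraint of \eqref{P2}, evaluated at the optimal pair $(x^*,z^*)$, reads $Z^\T(F(x^*)-v-z^*c)\leqslant 0$, which by Assumption \ref{ass:A5} is exactly $y-F(x^*)\in C$, i.e.\ $y\in F(x^*)+C\subseteq F[S]+C$. Since $S$ is compact and $F$ is continuous, $F[S]$ is compact, and since $C$ is closed (again by \ref{ass:A5}), the sum $F[S]+C$ is already closed; hence $\P=\cl(F[S]+C)=F[S]+C$, and in particular $y\in\P$.

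For weak $C$-minimality of $y$ I would argue by contradiction. Suppose there is $\bar y\in\P$ with $\bar y\in\{y\}-\interior C$. Using $\P=F[S]+C$, write $\bar y=F(\bar x)+\bar c$ with $\bar x\in S$ and $\bar c\in C$. Then
\[
  v+z^*c-F(\bar x)=(y-\bar y)+\bar c\in\interior C+C\subseteq\interior C,
\]
the last inclusion holding because $C$ is a convex cone, equivalently because $Z^\T(y-\bar y)>0$ and $Z^\T\bar c\geqslant 0$. By the characterization $\interior C=\{x\mid Z^\T x>0\}$, every component of $Z^\T(v+z^*c-F(\bar x))$ is strictly positive, so there is $\delta>0$ small enough that $Z^\T\big(v+(z^*-\delta)c-F(\bar x)\big)=Z^\T\big(v+z^*c-F(\bar x)\big)-\delta Z^\T c\geqslant 0$ still holds. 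Together with $g(\bar x)\leqslant 0$ this makes $(\bar x,z^*-\delta)$ feasible for \eqref{P2} with objective value $z^*-\delta<z^*$, contradicting optimality of $(x^*,z^*)$.

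The claim about $x^*$ then follows quickly: $x^*$ is a weak minimizer exactly when $F(x^*)$ is weakly $C$-minimal in $F[S]$. If it were not, there would be $x'\in S$ with $F(x^*)-F(x')\in\interior C$, and combining this with $y-F(x^*)\in C$ from the first step gives $y-F(x')\in C+\interior C\subseteq\interior C$; since $F(x')\in F[S]\subseteq\P$, this contradicts the weak $C$-minimality of $y$ just established. I expect the only delicate point to be the perturbation step in the second paragraph — specifically, ensuring that a sufficiently small downward move in $z$ keeps the conic inequality satisfied, which is where the openness of $\interior C$ (via $\interior C=\{x\mid Z^\T x>0\}$) and the fact that $\P$ equals $F[S]+C$ rather than merely its closure are both used. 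If one preferred not to invoke closedness of $F[S]+C$, one could instead pick $\bar y_k\in F[S]+C$ converging to $\bar y$, note $\bar y_k\in\{y\}-\interior C$ for large $k$, and run the same argument.
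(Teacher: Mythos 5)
Your proof is correct. The paper itself does not prove this statement -- it is quoted as Proposition 4.5 of the cited reference \cite{Loe14} -- so there is no in-paper argument to compare against; your argument is the standard one there: read the conic constraint at $(x^*,z^*)$ as $y-F(x^*)\in C$, and rule out a point of $\P$ in $\{y\}-\interior C$ by showing it would permit a strictly smaller feasible $z$, the key perturbation step being justified exactly as you say by the finitely many strict inequalities $Z^\T(v+z^*c-F(\bar x))>0$. Two small points worth making explicit: the identity $\P=F[S]+C$ does hold here (compact plus closed is closed, and $S$ is compact under \ref{ass:A2} and \ref{ass:A4}), though your fallback via approximating sequences also works; and feasibility of $(x^*,z^*)$ gives $x^*\in S$, which is needed before invoking Definition~\ref{def:sol} in the last step.
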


\section{An Algorithm for Bounded VCPs with Vertex
  Selection}\label{sec:4}
In this section we present an algorithm for computing a weak
$\varepsilon$-solution for Problem~\eqref{P}.  The algorithm computes
a shrinking sequence~$(\O^k)$ of polyhedral outer approximations and a
growing sequence $(\I^k)$ of polyhedral inner approximations of the
upper image $\P$, i.e. one has
\begin{equation}
  \O^0 \supseteq \O^1 \supseteq \dots \supseteq \P \supseteq \dots
  \supseteq \I^1 \supseteq \I^0.
\end{equation}
This is achieved by iteratively cutting off vertices $v$ of $\O^k$
while introducing new halfspaces. The algorithm is a modification of
the primal approximation algorithm presented in \cite{Loe14}. The
difference lies in the way the approximations are updated. While in
\cite{Loe14} there is no rule stated how to choose the next
vertex, we employ a vertex selection that takes into account
$\haus{\O^k}{\I^k}$. Therefore $\haus{\O^k}{\I^k}$ is computed in each
iteration by solving certain convex quadratic subproblems. We
formulate Corollary~\ref{cor:4.4} to show that the vertex selection
can be performed efficiently. 
The algorithm consists of two parts, an
initialization phase and an update phase, which we will explain in
detail below. Correctness is shown in Theorem \ref{thm:4.3}.

\begin{init}
  In the initialization phase an initial outer approximation $\O^0$
  and an initial inner approximation $\I^0$ of $\P$ are computed. To
  obtain $\O^0$, (P$_1$($z^j$)) is solved for every column $z^j$ of
  $Z$. Solutions $x^j$ are weak minimizers of \eqref{P} according to
  Proposition~\ref{prop:p1} and give rise to the following hyperplanes
  that support $\P$ at $F(x^j)$:
  \begin{equation}
    \mathcal{H}_j := \{y \in \R^q \mid z^{j\T}y = z^{j\T}F(x^j)\}.
  \end{equation}
  Thus, we can define $\O^0$ as the intersection of all halfspaces
  $\mathcal{H}_j^+$ that are defined by~$\mathcal{H}_j$, i.e.
  \begin{equation}\label{O0}
    \O^0 := \bigcap_{j=1}^{\ell} \mathcal{H}_j^+ = \bigcap_{j=1}^{\ell} \{y \in
    \R^q \mid z^{j\T}y \geqslant z^{j\T}F(x^j)\}.
  \end{equation}
  Note that $\O^0$ has at least one vertex, because \eqref{P} is
  bounded and $C$ is an ordering cone, in particular pointed.  An
  initial inner approximation $\I^0$ is readily available at no
  additional cost by setting
  \begin{equation}\label{I0}
    \I^0 := \conv \{F(x^j) \mid j = 1,\dots,\ell\} + C.
  \end{equation}
\end{init}
\begin{mainloop}
  During the update phase the current approximations are refined. In
  order to do so, supporting hyperplanes to the upper image are
  computed from solutions of \eqref{P2} and \eqref{D2} according to
  the following proposition \cite[see][Proposition 4.7]{Loe14}.
  \begin{proposition}\label{prop:hyper}
    Let $(x^*,z^*)$ and $(u^*,w^*)$ be solutions of \eqref{P2} and
    \eqref{D2}, respectively. Then the hyperplane
    \[
      \mathcal{H} := \{y \in \R^q \mid w^{*\T}y = w^{*\T}v+z^*\}
    \]
    is a supporting hyperplane of $\P$ at $y^*:=v+z^*c$.
  \end{proposition}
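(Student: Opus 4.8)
The plan is to verify the two defining properties of a supporting hyperplane at $y^* := v + z^* c$: first that $y^* \in \mathcal{H}$, i.e. $w^{*\T} y^* = w^{*\T} v + z^*$; and second that $\mathcal{H}$ bounds $\P$ from below, i.e. $w^{*\T} y \geqslant w^{*\T} v + z^*$ for all $y \in \P$. For the membership $y^* \in \mathcal{H}$, I would simply compute $w^{*\T} y^* = w^{*\T}(v + z^* c) = w^{*\T} v + z^*(w^{*\T} c) = w^{*\T} v + z^*$, using the dual feasibility constraint $w^{*\T} c = 1$ of \eqref{D2}. This is immediate and requires no work beyond reading off the constraint.

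The substantive part is the supporting inequality. The natural route is to use strong duality from Proposition~\ref{prop:3.7}: the optimal value of \eqref{D2} equals $z^*$, so
\[
  z^* = \inf_{x \in X \cap \dom g}\left\{ u^{*\T} g(x) + w^{*\T} F(x) \right\} - w^{*\T} v.
\]
Hence for every feasible $x \in S$ (so $x \in X \cap \dom g$ and $g(x) \leqslant 0$), using $u^* \geqslant 0$ one gets $u^{*\T} g(x) \leqslant 0$, and therefore $w^{*\T} F(x) \geqslant w^{*\T} F(x) + u^{*\T} g(x) \geqslant z^* + w^{*\T} v$. This shows $w^{*\T} y \geqslant w^{*\T} v + z^*$ for every $y \in F[S]$. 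To extend this to all of $\P = \cl(F[S] + C)$, I would note that $w^* \in C^+$ (another constraint of \eqref{D2}), so $w^{*\T} d \geqslant 0$ for every $d \in C$; thus the inequality persists on $F[S] + C$, and then on its closure by continuity of the linear functional $y \mapsto w^{*\T} y$.

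It then remains to observe that $w^* \neq 0$, so that $\mathcal{H}$ is a genuine hyperplane rather than all of $\R^q$: this follows since $w^{*\T} c = 1$ forces $w^* \neq 0$. Combining the membership $y^* \in \mathcal{H}$ with the inequality $w^{*\T} y \geqslant w^{*\T} v + z^*$ on $\P$ gives that $\mathcal{H}$ is a supporting hyperplane of $\P$ at $y^*$. The main (and really only) obstacle is handling the passage from $F[S]$ to $\P = \cl(F[S]+C)$ cleanly, but this is routine given $w^* \in C^+$ and continuity; everything else is just unpacking the constraints of \eqref{D2} together with the strong duality already supplied by Proposition~\ref{prop:3.7}. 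One should also double-check that Proposition~\ref{prop:3.7}'s hypotheses (in particular $v \notin \P$ and $c = p - v$ for some $p \in \interior \P$) are the standing assumptions under which Proposition~\ref{prop:hyper} is invoked in the algorithm, which they are by construction in the update step.
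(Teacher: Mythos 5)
Your argument is correct and is essentially the standard one: the paper does not prove this proposition itself but imports it from \cite[Proposition~4.7]{Loe14}, and the Lagrangian-duality computation you give (dual feasibility $w^{*\T}c=1$ for membership of $y^*$ in $\mathcal{H}$; $u^*\geqslant 0$, $w^*\in C^+$, and strong duality from Proposition~\ref{prop:3.7} for the bounding inequality on $F[S]$, then on $F[S]+C$ and its closure) is exactly the intended route. One small omission: a supporting hyperplane of $\P$ \emph{at} $y^*$ also requires $y^*\in\P$, which you never check. It follows in one line from primal feasibility of $(x^*,z^*)$: the constraint $Z^\T(F(x^*)-v-z^*c)\leqslant 0$ says $y^*=v+z^*c\in\{F(x^*)\}+C\subseteq\P$ (this is also the content of Proposition~\ref{prop:p2wmin}). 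You are also right that strong duality is genuinely needed here --- weak duality alone would only bound $\P$ at the dual optimal level, which could lie strictly below $w^{*\T}v+z^*$ --- and that its availability rests on the hypotheses of Proposition~\ref{prop:3.7}; just be aware that in the algorithm the condition ``$c=p-v$ with $p\in\interior\P$'' is what has to be arranged for by the vertex selection, not something the proposition's bare statement supplies.
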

  In iteration $k$ the input parameters for \ref{P2} are chosen by
  means of the following vertex selection procedure (VS).
  \begin{vs}
    For every $s \in \vertices \O^k$ the euclidean distance to $\I^k$
    is computed by solving
    \begin{equation*}\label{QP}
      \tag{QP($s,\I^k$)}
      \begin{aligned}
        \min \quad & \norm{p-s}^2 \\
        \text{s.t.} \quad & p \in \I^k.
      \end{aligned}
    \end{equation*}
    Note that \eqref{QP} lives in the image space of \eqref{P} and is
    convex quadratic. Next we consider the following bilevel
    optimization problem
    \begin{equation*}\label{VS}
      \tag{VS($\O^k,\I^k$)}
      \begin{aligned}
        \max \quad & \norm{p^*-s} \\
        \text{s.t.} \quad & s \in \vertices \O^k \\
        & p^* \; \text{solves} \; \eqref{QP}.
      \end{aligned}
    \end{equation*}
    A solution to \eqref{VS} is a vertex of $\O^k$ that yields the
    shortest distance to the current inner approximation. Since
    $\O^k \supseteq \I^k$ by construction, we obtain the Hausdorff
    distance $\haus{\O^k}{\I^k}$ easily from a solution of \eqref{VS}
    as explained in the next corollary.
    \begin{corollary}\label{cor:haus}
      Let $\O,\I \subseteq \R^q$ be polyhedra with the same pointed
      recession cone and~${\O \supseteq \I}$. Further let $(s^*,p^*)$
      be a solution of \emph{(VS($\O$,$\I$))}. Then
      \[
        \haus{\O}{\I} = \norm{p^*-s^*}.
      \]  
    \end{corollary}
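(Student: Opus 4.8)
The plan is to reduce $\haus{\O}{\I}$ to a maximum taken over the finitely many vertices of $\O$, and then to recognise that this maximum is exactly the optimal value of the bilevel problem (VS($\O$,$\I$)). First I would invoke formula \eqref{eq:4}: since $\O$ and $\I$ are polyhedra with the same pointed recession cone,
\[
  \haus{\O}{\I} = \max\left\lbrace \adjustlimits\max_{a \in \vertices \O}\min_{b \in \I}\norm{a-b},\ \adjustlimits\max_{b \in \vertices \I}\min_{a \in \O}\norm{a-b}\right\rbrace .
\]
The second term vanishes because $\vertices \I \subseteq \I \subseteq \O$, so $\min_{a \in \O}\norm{a-b}=0$ for every $b \in \vertices \I$. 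Hence $\haus{\O}{\I} = \max_{a \in \vertices \O}\min_{b \in \I}\norm{a-b}$.

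Next I would observe that for each fixed $s \in \vertices \O$ the inner problem is the metric projection of $s$ onto $\I$: as $\I$ is nonempty (a solution of (VS($\O$,$\I$)) is assumed to exist), closed, and convex, this projection exists and is the unique minimizer of $\norm{p-s}^2$ over $p \in \I$, that is, of (QP($s$,$\I$)). Writing $p_s$ for that minimizer, $\min_{b \in \I}\norm{b-s} = \norm{p_s-s}$, and therefore $\haus{\O}{\I} = \max_{s \in \vertices \O}\norm{p_s-s}$. The right-hand side is precisely the objective of (VS($\O$,$\I$)) maximized over its feasible set $\vertices \O$, which is finite and nonempty (a nonempty polyhedron with a pointed recession cone has at least one, and only finitely many, vertices). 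Since $(s^*,p^*)$ solves (VS($\O$,$\I$)), the point $p^*$ solves (QP($s^*$,$\I$)) and $\norm{p^*-s^*}$ equals this optimal value, which yields $\haus{\O}{\I} = \norm{p^*-s^*}$.

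There is no real obstacle in this argument; the only steps deserving a line of justification are the applicability of \eqref{eq:4} (which needs exactly the polyhedrality and the common pointed recession cone that are assumed) and the well-posedness of the maxima and of (VS($\O$,$\I$)), i.e.\ finiteness and nonemptiness of $\vertices \O$ together with existence and uniqueness of the projection onto $\I$. One could alternatively bypass \eqref{eq:4} by noting that $a \mapsto \min_{b\in\I}\norm{a-b}$ is convex and, because $\O$ and $\I$ share a recession cone, does not increase along recession directions of $\O$, so that its supremum over $\O = \conv \vertices\O + \cone D$ is attained at a vertex of $\O$; invoking \eqref{eq:4} is, however, the cleaner route.
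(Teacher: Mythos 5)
Your proof is correct and follows essentially the same route as the paper's: apply formula \eqref{eq:4}, note that the second maximum vanishes because $\I \subseteq \O$, identify the inner minimization with (QP($s,\I$)) (squaring the norm does not change the minimizer), and recognise the remaining expression as the optimal value of (VS($\O,\I$)). Your additional remarks on well-posedness (finiteness of $\vertices\O$, existence of the projection onto $\I$) are fine but not needed beyond what the paper already assumes.
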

    \begin{proof}
      As $\O \supseteq \I$ and by Equation \eqref{eq:4}, the maximum
      in the definition of $d_{\mathsf{H}}$ is attained as
      \[
        \adjustlimits\max_{s \in \vertices \O} \min_{p \in \I} \norm{p-s}.
      \]
      Since squaring the norm in the objective function of (QP) does
      not change the solution, we get
      \begin{align*}
        \haus{\O}{\I} &= \adjustlimits\max_{s \in \vertices \O} \min_{p \in \I}
                        \norm{p-s} \\
                      &= \max_{s \in \vertices \O} \left\lbrace \norm{p^*-s} \mid p^* \;
                        \text{is a solution of (QP($s,\I$))} \right\rbrace \\
                      &= \norm{p^*-s^*}. \qedhere
      \end{align*}  
    \end{proof}
  \end{vs}
  Note that solving \eqref{VS} amounts to solving \eqref{QP} for every
  vertex $s$ of~$\O^k$ and taking a maximum over a finite set. If
  $\haus{\O^k}{\I^k} \leqslant \varepsilon$, then
  $\haus{\O^k}{\P} \leqslant \varepsilon$ and
  $\haus{\P}{\I^k} \leqslant \varepsilon$ follow immediately from the
  fact that $\O^k \supseteq \P \supseteq \I^k$. In this case a weak
  $\varepsilon$-solution $\mathcal{X}$ to \eqref{P} is returned.
  Otherwise we set $v := s^*$ and~${c := p^*-s^*}$ and solve
  \ref{P2}. Thereby we obtain a supporting hyperplane $\mathcal{H}$
  of~ $\P$ according to Proposition \ref{prop:hyper} and set
  \begin{equation}\label{eq:15}
    \begin{aligned}
      \O^{k+1} &= \O^k \cap \mathcal{H}^+, \\
      \I^{k+1} &= \cl\conv \left(\I^k \cup \{F(x^*)\}\right),
    \end{aligned}
  \end{equation}
  where $x^*$ solves \eqref{P2}. Also, $x^*$ is appended to the
  solution set $\mathcal{X}$. Note, that the closure in Equation
  \eqref{eq:15} is necessary, because we are dealing with unbounded
  sets. However, $\I^{k+1}$ does not have to be computed explicitly as
  we are only interested in its vertices.  Pseudocode is presented in
  Algorithm \ref{alg:1} and one iteration of the algorithm is
  illustrated in Figure \ref{fig:it}.
\end{mainloop}
\begin{algorithm}
  \DontPrintSemicolon
  \KwData{Problem \eqref{P}, accuracy $\varepsilon > 0$, max. no. of
    iterations $K$}
  \KwResult{Weak $\varepsilon$-solution $\mathcal{X}$ of \eqref{P},
    vertices $\O$/$\I$ of an outer/inner approximation of $\P$
    \textbf{or} max. no. of iterations exceeded}
  Compute a solution $x^j$ to (P$_1$($z^j$)) for $j=1,\dots,\ell$\;
  $\mathcal{X} \isassigned \{x^j \mid j=1,\dots,\ell\}$\;
  Compute an outer approximation $\O^0$ according to \eqref{O0}\;
  Compute an inner approximation $\I^0$ according to \eqref{I0}\;
  $k \isassigned 0$, $d_{\mathsf{H}} \isassigned \infty$\;
  \Repeat{$d_{\mathsf{H}} \leqslant \varepsilon$ \textnormal{or} $k=K$}{
    Compute a solution $(s,p)$ to \eqref{VS}\;
    $d_{\mathsf{H}} \isassigned \norm{p-s}$\;
    \If{$d_{\mathsf{H}} > \varepsilon$}{
      $v \isassigned s$, $c \isassigned p-s$\;
      Compute solutions $(x,z)$/$(u,w)$ to \eqref{P2}/\eqref{D2}\;
      $\mathcal{X} \isassigned \mathcal{X} \cup \{x\}$\;
      $\O^{k+1} \isassigned \O^k \cap \{y \in \R^q \mid w^\T y
      \geqslant w^\T v+z\}$\;
      $\I^{k+1} \isassigned \cl\conv(\I^k \cup \{F(x)\})$\;
      $k \isassigned k+1$\;
    }
  }
  $\O \isassigned \vertices \O^k$\;
  $\I \isassigned \vertices \I^k$\;
  \KwRet{$\mathcal{X}$, $\O$, $\I$}
  \caption{A Benson-type Algorithm with Vertex Selection for \eqref{P}}\label{alg:1}
\end{algorithm}
\begin{figure}
  \centering
  \includegraphics{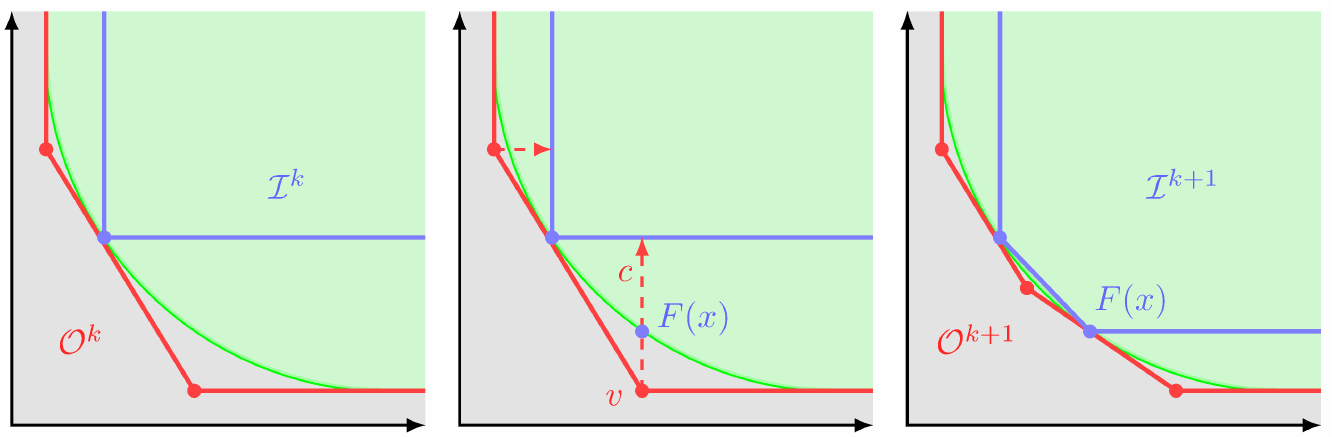}
  \decoRule
  \caption{\label{fig:it}%
    \textbf{Left:} Outer (red) and inner (blue) approximations of
    $\mathcal{P}$ after iteration~$k$. \textbf{Center:} The vertex $v$
    and direction $c$ are obtained by the vertex
    selection. 
    The point~$F(x)$ is obtained by solving
    \eqref{P2}. \textbf{Right:} The updated outer and inner
    approximations after cutting off $v$ and adding $F(x)$ as a vertex
    to $\mathcal{I}^k$.}
\end{figure}
\begin{theorem}\label{thm:4.3}
  Under Assumptions {\normalfont\ref{ass:A1} -- \ref{ass:A5}}
  Algorithm \ref{alg:1} is correct, i.e. if it terminates with $k<K$
  it returns a weak $\varepsilon$-solution of \eqref{P}.
\end{theorem}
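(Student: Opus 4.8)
The plan is to verify two things: first, that whenever the algorithm terminates with $k<K$, the set $\mathcal{X}$ it returns is a weak $\varepsilon$-solution, and second — since "correctness" of an approximation algorithm implicitly requires it — that the invariant $\mathcal{O}^k \supseteq \mathcal{P} \supseteq \mathcal{I}^k$ is maintained throughout, so that the termination test on $d_{\mathsf H}$ is meaningful. I would organize the argument around these two claims.

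First I would establish the sandwich invariant $\mathcal{O}^k \supseteq \mathcal{P} \supseteq \mathcal{I}^k$ by induction on $k$. For the base case, each hyperplane $\mathcal{H}_j$ supports $\mathcal{P}$ at $F(x^j)$ (this is exactly what the Initialization paragraph records, via Proposition~\ref{prop:p1} and the fact that the weighted-sum optimal value gives a supporting halfspace), so $\mathcal{O}^0 = \bigcap_j \mathcal{H}_j^+ \supseteq \mathcal{P}$; and $\mathcal{I}^0 = \conv\{F(x^j)\} + C \subseteq \cl(F[S]+C) = \mathcal{P}$ because each $x^j \in S$ and $\mathcal{P}$ is closed, convex and contains $F[S]+C$. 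For the inductive step, Proposition~\ref{prop:hyper} guarantees that the hyperplane $\mathcal{H}$ obtained in iteration $k$ from the solutions of \eqref{P2}/\eqref{D2} supports $\mathcal{P}$, hence $\mathcal{P} \subseteq \mathcal{H}^+$ and $\mathcal{O}^{k+1} = \mathcal{O}^k \cap \mathcal{H}^+ \supseteq \mathcal{P}$; similarly $F(x^*) \in F[S] \subseteq \mathcal{P}$, so $\mathcal{I}^k \cup \{F(x^*)\} \subseteq \mathcal{P}$ and, $\mathcal{P}$ being closed and convex, $\mathcal{I}^{k+1} = \cl\conv(\mathcal{I}^k \cup \{F(x^*)\}) \subseteq \mathcal{P}$. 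I should also note at this point that all the subproblems invoked are solvable: (P$_1$($z^j$)) by Assumption~\ref{ass:A1} and compactness of $S$; \eqref{P2}/\eqref{D2} by Proposition~\ref{prop:3.7}, which applies because $v = s^* \notin \mathcal{P}$ (as $d_{\mathsf H} > \varepsilon > 0$ forces $s^* \notin \mathcal{I}^k$, but actually I need $s^* \notin \mathcal{P}$ — see the obstacle below) and $c = p^*-s^*$ is of the required form $p - v$ with $p = p^* \in \mathcal{I}^k \subseteq \mathcal{P}$; and \eqref{VS} reduces to finitely many convex quadratic programs \eqref{QP}, each solvable since $\mathcal{I}^k$ is a nonempty closed convex set.

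Second, assuming termination at iteration $k$ with $k<K$, the stopping condition gives $d_{\mathsf H} \leqslant \varepsilon$, and by Corollary~\ref{cor:haus} (applied to $\mathcal{O} = \mathcal{O}^k \supseteq \mathcal{I}^k = \mathcal{I}$, which have the same pointed recession cone $C$) this means $\haus{\mathcal{O}^k}{\mathcal{I}^k} \leqslant \varepsilon$. Since $\mathcal{O}^k \supseteq \mathcal{P} \supseteq \mathcal{I}^k$ are nested, for any $p \in \mathcal{P}$ we have $\inf_{y \in \mathcal{I}^k}\norm{p-y} \leqslant \inf_{y \in \mathcal{I}^k} \sup_{o \in \mathcal{O}^k}\norm{o-y}$-type monotonicity — more cleanly, nestedness directly yields $\haus{\mathcal{P}}{\mathcal{I}^k} \leqslant \haus{\mathcal{O}^k}{\mathcal{I}^k} \leqslant \varepsilon$. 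Now $\mathcal{X}$ is finite and nonempty, and $\mathcal{I}^k = \cl\conv(F[\mathcal{X}] + C)$ by construction (an easy induction on \eqref{I0} and \eqref{eq:15}, using $\cl\conv(\mathcal{I}^k \cup \{F(x^*)\}) = \cl\conv(F[\mathcal{X}\cup\{x^*\}] + C)$ since $\mathcal{I}^k$ already contains $C$ as recession cone), so $\haus{\conv F[\mathcal{X}]+C}{\mathcal{P}} \leqslant \varepsilon$ (the closure does not change the Hausdorff distance), i.e. $\mathcal{X}$ is an $\varepsilon$-infimizer in the sense of Definition~\ref{def:3}. Finally, every element of $\mathcal{X}$ is a weak minimizer: the points $x^j$ from initialization are weak minimizers by Proposition~\ref{prop:p1}, and each subsequently added $x^*$ is a weak minimizer by Proposition~\ref{prop:p2wmin}. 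Hence $\mathcal{X}$ is a weak $\varepsilon$-solution.

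The main obstacle I anticipate is the step "$v = s^* \notin \mathcal{P}$", which is needed to invoke Proposition~\ref{prop:3.7} when setting up \eqref{P2}. The stopping test only fails, i.e. $d_{\mathsf H} = \norm{p^*-s^*} > \varepsilon > 0$, which by itself only shows $s^* \notin \mathcal{I}^k$, not $s^* \notin \mathcal{P}$. One must argue that a vertex $s^*$ of $\mathcal{O}^k$ achieving positive distance to $\mathcal{I}^k$ cannot lie in $\mathcal{P}$ — plausibly because $s^*$ being a vertex of the outer approximation that is strictly separated by the next supporting hyperplane means it sits strictly outside $\mathcal{P}$; but making this rigorous may require a small separation argument or a reformulation of the vertex selection (e.g. selecting among vertices of $\mathcal{O}^k$ not already in $\mathcal{I}^k$, or showing $\vertices\mathcal{O}^k \cap \interior\mathcal{P} = \emptyset$). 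A secondary, more technical point is confirming that $\mathcal{O}^k$ and $\mathcal{I}^k$ genuinely have the same pointed recession cone $C$ at every stage — straightforward for $\mathcal{I}^k$, and for $\mathcal{O}^k$ it follows because all the normal vectors $w$ (the $z^j$ and the dual solutions $w^*$) lie in $C^+$, so the recession cone of $\mathcal{O}^k$ is $(C^+)^+ = C$ by fact~\ref{pc:2}; I would make sure this is stated so that Corollary~\ref{cor:haus} and Equation~\eqref{eq:4} apply.
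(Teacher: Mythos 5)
Your proof follows essentially the same route as the paper's: establish the sandwich $\O^k \supseteq \P \supseteq \I^k$, check solvability of all subproblems, use Corollary~\ref{cor:haus} and nestedness to pass from $\haus{\O^\kappa}{\I^\kappa}\leqslant\varepsilon$ to $\haus{\P}{\I^\kappa}\leqslant\varepsilon$, identify $\I^\kappa$ with $\conv F[\mathcal{X}]+C$, and invoke Propositions~\ref{prop:p1} and~\ref{prop:p2wmin} for weak minimality of the returned points. The obstacle you flag --- that invoking Proposition~\ref{prop:3.7} requires $v=s^*\notin\P$ (and, strictly speaking, also $p^*\in\interior\P$ so that $c=p^*-s^*$ has the form $p-v$ demanded there), neither of which follows from $\norm{p^*-s^*}>\varepsilon$ alone --- is a genuine subtlety, but the paper's own proof does not resolve it either: it simply cites Proposition~\ref{prop:3.7} without verifying its hypotheses for the parameters produced by the vertex selection. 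So your write-up is at least as careful as the published argument, and explicitly closing that hypothesis check (e.g.\ by showing a vertex of $\O^k$ lying in $\P$ must be an extreme point of $\P$ and handling that case separately) would be an improvement over the paper rather than a repair of your own proof.
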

\begin{proof}
  Optimal solutions to (P$_1$($z^j$)) exist for all $j=1,\dots,\ell$
  by Assumptions~\ref{ass:A1},~\ref{ass:A2}, and
  \ref{ass:A4}. Therefore line 1 is valid and the set $\mathcal{X}$
  initialized in line 2 is nonempty. Proposition \ref{prop:p1} states
  that $\mathcal{X}$ only contains weak minimizers of \eqref{P} and
  implies that the vertices of $\I^0$ are weakly $C$-minimal elements
  of $\P$. Because $C$ is a pointed cone, the set~$\O^0$ has at least
  one vertex. Therefore the problem (VS($\O^0,\I^0$)) has a
  solution. Optimal solutions to \eqref{P2} and \eqref{D2} exist
  according to Proposition \ref{prop:3.7}. By Proposition
  \ref{prop:p2wmin} a weak minimizer of \eqref{P} is added to
  $\mathcal{X}$ in line 12 and~$\I^k$ is updated with a new vertex
  that is weakly $C$-minimal in $\P$. Now, $\I^{k+1} \subseteq \P$,
  because it is the generalized convex hull of finitely many weakly
  $C$-minimal points and directions of $\P$. Moreover, as the
  hyperplane $\{y \in \R^q \mid w^\T y=w^\T v+z\}$ supports $\P$ in
  $v+zc$, the set~$\O^{k+1}$ in line 13 is nonempty, has a vertex, and
  satisfies $\O^{k+1} \supseteq \P$. Note that by Corollary
  \ref{cor:haus}, $d_{\mathsf{H}}$ defined in line 8 is the Hausdorff
  distance between the current approximations $\O^k$ and
  $\I^k$. Therefore, assuming termination with $k < K$, the algorithm
  terminates if the Hausdorff distance between the current outer and
  inner approximation of $\P$ is less than or equal to the error
  margin $\varepsilon$. Assume this is the case after $\kappa$
  iterations. We must show that $\mathcal{X}$ is a weak
  $\varepsilon$-solution of \eqref{P}. Clearly, $\mathcal{X}$ is
  finite and, by Propositions \ref{prop:p1} and \ref{prop:p2wmin}, it
  consists of weak minimizers only. Moreover we
  have~${\haus{\O^\kappa}{\I^\kappa} \leqslant \varepsilon}$ and
  therefore $\haus{\P}{\I^\kappa} \leqslant \varepsilon$. Finally, due
  to its construction,~$\I^\kappa$ can be written
  as~${\I^\kappa = \conv F[\mathcal{X}]+C}$. Hence, $\mathcal{X}$
  fulfills the definition of a weak $\varepsilon$-solution which
  completes the proof.
\end{proof}
\begin{vs_efficient}
  So far, the main drawback of the vertex selection is that it
  requires \eqref{QP} to be solved for every $s \in\vertices\O^k$. In
  order to make VS efficient, we make the following observation about
  the input parameters: From one iteration to the next, the inner
  approximation only changes by introducing one new vertex. Therefore
  the solutions of \eqref{QP} and~ (QP($s,\I^{k+1}$)) may be
  identical. We can exploit this structure by checking a single
  inequality to determine whether, for a given vertex $s$ of $\O^k$,
  we have to solve \eqref{QP}. The following result captures this
  idea.
  \begin{corollary}\label{cor:4.4}
    Let the iteration be $k+1$ in Algorithm \ref{alg:1}. Let $s$ be a
    vertex of both $\O^k$ and $\O^{k+1}$, let $p^*$ be a solution to
    \eqref{QP} and $F(x)$ such
    that~${\I^{k+1} = \cl\conv(\I^k \cup \{F(x)\})}$. Then the
    following are equivalent:
    \begin{enumerate}[label=(\roman*)]
    \item $p^*$ is a solution to \emph{(QP($s,\I^{k+1}$))},\label{cor4.4_statement_i}
    \item $(p^*-s)^\T(F(x)-p^*) \geqslant 0$.\label{cor4.4_statement_ii}
    \end{enumerate}
  \end{corollary}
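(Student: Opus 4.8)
The plan is to recognize that both (QP($s,\I^k$)) and (QP($s,\I^{k+1}$)) are instances of projecting the point $s$ onto a closed convex set, so that the standard variational characterization of the metric projection applies: $p^*$ is the projection of $s$ onto a closed convex set $K$ if and only if $p^* \in K$ and $(s - p^*)^\T(y - p^*) \leqslant 0$ for all $y \in K$. Since $p^*$ is given as the solution of (QP($s,\I^k$)), it already satisfies $p^* \in \I^k \subseteq \I^{k+1}$ and the projection inequality with respect to every $y \in \I^k$. Hence the only thing that can fail when passing to $\I^{k+1}$ is the projection inequality against the newly generated points, and the content of the corollary is that it suffices to test it against the single new vertex $F(x)$.

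For the implication \ref{cor4.4_statement_i} $\Rightarrow$ \ref{cor4.4_statement_ii}: if $p^*$ solves (QP($s,\I^{k+1}$)), then since $F(x) \in \I^{k+1}$ the projection inequality gives $(s - p^*)^\T(F(x) - p^*) \leqslant 0$, which is exactly $(p^* - s)^\T(F(x) - p^*) \geqslant 0$. For the converse \ref{cor4.4_statement_ii} $\Rightarrow$ \ref{cor4.4_statement_i}: I need to verify the projection inequality $(s - p^*)^\T(y - p^*) \leqslant 0$ for an arbitrary $y \in \I^{k+1}$. Recall $\I^{k+1} = \cl\conv(\I^k \cup \{F(x)\})$; using the fact that $\I^k = \conv F[\mathcal{X}] + C$ already contains the recession cone $C$, one can argue that $\I^{k+1} = \conv\big((\I^k) \cup (\{F(x)\} + C)\big)$, and more concretely every $y \in \I^{k+1}$ can be written as $y = \lambda q + (1-\lambda) r$ with $\lambda \in [0,1]$, $q \in \I^k$, and $r \in \{F(x)\} + C$ (so $r = F(x) + d$ with $d \in C$). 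Then
\[
(s - p^*)^\T(y - p^*) = \lambda\,(s - p^*)^\T(q - p^*) + (1-\lambda)\,(s - p^*)^\T(F(x) - p^*) + (1-\lambda)\,(s - p^*)^\T d.
\]
The first term is $\leqslant 0$ by optimality of $p^*$ for (QP($s,\I^k$)); the second term equals $-(1-\lambda)(p^*-s)^\T(F(x)-p^*) \leqslant 0$ by hypothesis \ref{cor4.4_statement_ii}; and the third term is $\leqslant 0$ because $p^* \in \I^k$ implies $p^* + d \in \I^k$ for $d \in C$, so optimality of $p^*$ against the ray $\{p^* + t d : t \geqslant 0\} \subseteq \I^k$ forces $(s-p^*)^\T d \leqslant 0$ (differentiate $\norm{p^* + td - s}^2$ at $t = 0$). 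Summing, $(s - p^*)^\T(y - p^*) \leqslant 0$ for all $y \in \I^{k+1}$, so $p^*$ is the projection of $s$ onto $\I^{k+1}$, i.e. it solves (QP($s,\I^{k+1}$)).

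The main obstacle I anticipate is the bookkeeping around the closure and the recession cone in the description of $\I^{k+1}$: one must be careful that $\cl\conv(\I^k \cup \{F(x)\})$ really does decompose so that a general element splits into an $\I^k$-part and a $(\{F(x)\}+C)$-part as used above, rather than requiring a limiting argument. Since $\I^k$ is a polyhedron containing $C$ as recession cone and $F(x)$ is a single point, $\conv(\I^k \cup \{F(x)\})$ is already closed (the convex hull of a polyhedron and a point is polyhedral), so the closure is harmless and the decomposition is exact; once that is noted the argument is the two-term projection inequality check above. Everything else is the elementary first-order optimality condition for a convex quadratic, which I would invoke rather than rederive.
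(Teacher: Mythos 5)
Your proof is correct and follows essentially the same route as the paper's: the first-order/projection characterization of the minimizer of (QP), the forward implication by testing the inequality at $F(x)$, and the reverse implication by decomposing a general point of $\I^{k+1}$ into an $\I^k$-part, an $F(x)$-part, and a $C$-part and bounding each term (the paper phrases this step as a contradiction, you argue directly, which is a purely cosmetic difference). One small correction to a side remark: $\conv(\I^k\cup\{F(x)\})$ need \emph{not} already be closed (the convex hull of a ray and a point off that ray misses limit points), but your decomposition is nonetheless exact because $\cl\conv(\I^k\cup\{F(x)\}) = \conv\bigl(\I^k\cup(\{F(x)\}+C)\bigr)$, which is polyhedral since both pieces share the recession cone $C$.
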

  \begin{proof}
    This is a straightforward consequence of convexity and a standard
    result in convex optimization. Given a convex optimization problem
    with differentiable objective function $f$ and feasible region $S$
    the following are equivalent, see \cite[][Section
    4.2.3.]{Boy04}:
    \begin{enumerate}[label=(\alph*)]
    \item $p^* \in S$ is a solution,\label{cor4.4_proof_i}
    \item $\nabla f(p^*)^\T(p-p^*) \geqslant 0$ for all $p \in S$.\label{cor4.4_proof_ii}
    \end{enumerate}
    Together with $\nabla \left[ \norm{p^*-s}^2 \right] = 2(p^*-s)$,
    \ref{cor4.4_statement_i} is equivalent to
    \begin{equation*}
      (p^*-s)^\T (p-p^*) \geqslant 0 \; \text{for all} \; p \in \I^{k+1}.
    \end{equation*}
    This inequality holds in particular for $p=F(x) \in
    \I^{k+1}$. Therefore \ref{cor4.4_statement_i} implies
    \ref{cor4.4_statement_ii}. On the other hand, assume that
    \ref{cor4.4_statement_ii} holds and $p^*$ is not a solution to
    (QP($s,\I^{k+1}$)). Then, as $p^*$ solves \eqref{QP}, there must
    exist some $\bar{p} \in \I^{k+1}$, such that
    \[
      (p^*-s)^\T(\bar{p}-p^*) < 0.
    \]
    By the definition of $\I^{k+1}$, $\bar{p}$ can be written as
    $\bar{p} = \lambda F(x) + (1-\lambda)y + c$ for some
    $0 \leqslant \lambda \leqslant 1$, $y \in \I^k$, and $c \in
    C$. Altogether this yields
    \begin{align*}
      0 &> (p^*-s)^\T(\bar{p}-p^*) \\
        &= \lambda\underbrace{(p^*-s)^\T(F(x)-p^*)}_{\substack{\geqslant\;
          0,\;\text{bc. $p^*$ is optimal}\\\text{for
      (QP($s,\I^k$))}}}+(1-\lambda)\underbrace{(p^*-s)^\T(y-p^*)}_{\geqslant\;
      0\;\text{by \ref{cor4.4_proof_ii} for}\;S=\I^k}+(p^*-s)^\T c\\
        &\geqslant (p^*-s)^\T(\underbrace{p^*+c}_{\in \I^k}-p^*)\\
        &\geqslant 0.
    \end{align*}
    This is a contradiction. Thus $p^*$ solves (QP($s,\I^{k+1}$)) and
    the proof is complete.
  \end{proof}
\end{vs_efficient}

\section{Numerical Examples}\label{sec:5}
In this section we present three examples and compare computational
results with the primal algorithm in \cite{Loe14} illustrating
the benefits of the vertex selection approach.  Moreover we present an
application of Algorithm \ref{alg:1} to the problem of regularization
parameter tracking in machine learning as suggested in
\cite{Gie19,Nus19}, as well as an example from structural mechanics
with non-differentiable objective functions.  The algorithms are
implemented in MATLAB R2016b. Solving the scalar optimization problems
is done with \emph{CVX} v2.1, a package for specifying and solving
convex programs \cite{Gra14,Boy08}, and \emph{GUROBI}~v8.1
\cite{Gur19}. We use \emph{bensolve tools}
\cite{Wei16,Cir18}, a toolbox for polyhedral calculus and
polyhedral optimization, to handle the outer and inner approximations
of the upper image, in particular to compute a $V$-representation of
the outer approximation in every iteration. All experiments are
conducted on a machine with a 2.2GHz Intel Core i7 and~8GB RAM.

\begin{example}\label{ex:1}
  We consider an academic example where the feasible region is an
  axially parallel ellipsoidal body with semi-axes of lengths 1, $a$,
  and 5. Here $a \in \R_{++}$ is any parameter. Thus, by variying $a$
  we can steer how dilated the body is along the~$x_2$-axis.
  Altogether the problem can be formulated as
  \begin{align*}
    &\min \; F(x) = \begin{pmatrix} x_1\\x_2\\x_3 \end{pmatrix} \;\text{w.r.t.} \; \leqslant \\
    &\;\text{s.t.} \; \left(\frac{x_1-1}{1}\right)^2+\left(\frac{x_2-1}{a}\right)^2+\left(\frac{x_3-1}{5}\right)^2
      \leqslant 1.
  \end{align*}
  Computational data can be seen in Table \ref{tbl:1} for
  $\varepsilon =0.05$ and different values of $a$.  It shows that the
  performance of the algorithm with VS is not affected by the choice
  of $a$. However, without VS the number of scalarizations to solve
  scales with the magnitude of $a$.  This also has a notable impact on
  the computation time. Moreover the algorithm with VS computes
  approximately half as many minimizers, thus obtaining a coarser
  approximation.  These effects can be observed in Figure
  \ref{fig:ex1} which displays the inner approximations computed by
  both algorithms for $a=7$. Table \ref{tbl:2} shows the impact of
  Corollary \ref{cor:4.4}. On average 82\% of the quadratic
  subproblems can be spared, making VS very efficient.
  \begin{table}
    \centering
    \caption{Experimental data for Example \ref{ex:1} for
      $\varepsilon=0.05$. It displays the computation time with and
      without VS as well as the size $\lvert \mathcal{X} \rvert$ of
      the solution set for different values of $a$.}\label{tbl:1}
    \begin{tabular}{crc}
      \toprule
      \boldmath{$a$} & \cell{cc}{\multicolumn{2}{c}{\textbf{time}} \\
      \textbf{VS} \cmark & \textbf{VS} \xmark} &
                                                 \cell{ll}{\multicolumn{2}{c}{\boldmath{$\lvert \mathcal{X}
                                                 \rvert$}} \\ \textbf{VS} \cmark & \textbf{VS} \xmark}
      \\
      \midrule
      5 & \cell{rr}{39.72 & 129.03} & \cell{lr}{66\;\; & \;\;114} \\
      7 & \cell{rr}{42.67 & 147.42} & \cell{lr}{72\;\; & \;\;127} \\
      10 & \cell{rr}{47.21 & 175.51} & \cell{lr}{78\;\; & \;\;139} \\
      20 & \cell{rr}{45.15 & 186.65} & \cell{lr}{76\;\; & \;\;154} \\
      \bottomrule
    \end{tabular}
  \end{table}
  \begin{figure}
    \centering
    \subfloat[{with \vrtxslct}]{%
      \label{fig:ex1vs}%
      \includegraphics[width=.3\textwidth]{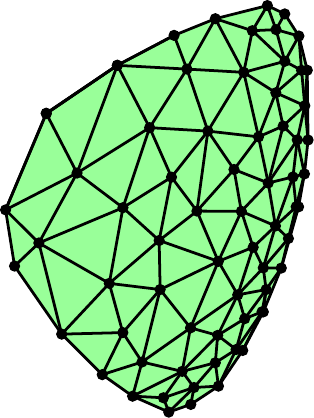}%
    }
    \hfil
    \subfloat[{without \vrtxslct}]{%
      \label{fig:ex1wo}\includegraphics[width=.3\textwidth]{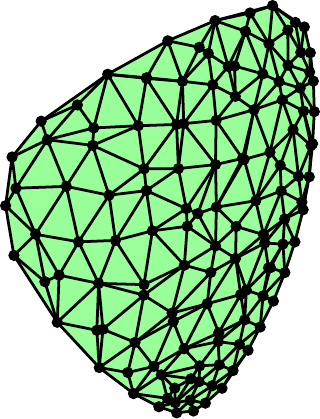}}

    \vspace{10pt}
    \decoRule
    \caption{\label{fig:ex1}
      Inner approximations of the upper image for Example~\ref{ex:1}
      with $a = 7$; with \vrtxslct~\protect\subref{fig:ex1vs} and
      without~\protect\subref{fig:ex1wo}.  Each
      vertex corresponds to a weak minimizer. One can see that without
      \vrtxslct\ there are many vertices in close proximity to
      each other, especially in regions that exhibit a large curvature
      (bottom right). With \vrtxslct, the vertices are ``spread
      more evenly'' across the surface.%
    }
  \end{figure}
  \begin{table}
    \centering
    \caption{Number of quadratic problems solved with and without
      using the equivalence in Corollary \ref{cor:4.4} for different
      values of $a$ and $\varepsilon=0.05$ in Example
      \ref{ex:1}.}\label{tbl:2}
    \begin{tabular}{cr}
      \toprule
      \boldmath{$a$} &
                       \cell{rr}{\multicolumn{2}{c}{\textbf{Cor. \ref{cor:4.4}}}
      \\
      \cmark & \xmark} \\
      \midrule
      5 & \cell{rr}{417 & 3348} \\
      7 & \cell{rr}{640 & 4060} \\
      10 & \cell{rr}{1019 & 4849} \\
      20 & \cell{rr}{871 & 4420} \\
      \bottomrule
    \end{tabular}
  \end{table}
\end{example}
\begin{example}[Regularization parameter tracking in machine learning]\label{ex:2}
  Regularized learning has been a common practice in machine learning
  over the past years. One of the heavily studied approaches is the
  \emph{elastic net}:
  \begin{equation}\label{enet}
    \min \; \alpha_1\norm{Ax-b}^2 + \alpha_2\norm{x}_1 +
    \alpha_3\norm{x}^2,
  \end{equation}
  where $A$ and $b$ are a matrix and a vector of appropriate sizes
  containing observed data and $\norm{\cdot}_1$ denotes the
  $\ell_1$-norm. The weight vector
  $\alpha = (\alpha_1,\alpha_2,\alpha_3)^\T$ steers the influence of
  the loss function $\norm{Ax-b}^2$ and the regularization terms
  $\norm{x}_1$ and $\norm{x}^2$ relative to each other.  The task of
  choosing $\alpha$ is called \emph{regularization parameter tracking}
  and is a difficult problem on its own. While there are approaches to
  this problem for certain classes \cite[see][]{Fis15,Efr04}, often
  one has to solve Problem \eqref{enet} for every $\alpha$ on a grid
  in the parameter domain.  The authors of \cite{Gie19} propose a new
  method by observing that Problem \eqref{enet} is the weighted sum
  scalarization of the VCP
  \begin{equation}\label{en:CVOP}
    \min \begin{pmatrix} \norm{Ax-b}^2 \\ \norm{x}_1 \\    
      \norm{x}^2 \end{pmatrix} \; \text{w.r.t.} \leqslant.
  \end{equation}
  Applying Algorithm \ref{alg:1} to that problem yields a weak
  $\varepsilon$-solution $\mathcal{X}$ in which each weak minimizer
  corresponds to a different choice of $\alpha$. By the definition of
  an infimizer we have that for every $\alpha \in \R^3$ there is some
  $x \in \mathcal{X}$ which is $\varepsilon$-optimal for Problem
  \eqref{enet}. Therefore we obtain a selection of parameters that is
  optimal up to a tolerance of $\varepsilon$.

  The elastic net is frequently used in microarray classification and
  gene selection, a problem in computational biology.  A key
  characteristic of such problems is that the dimension of the
  variable space is much larger than the number of observations. As
  overfitting is a major concern in such a scenario, regularized
  approaches are favorable \cite[cf.][]{Has05}. Due to the problem
  dimension, solving scalarizations becomes costly. Therefore VS may
  be advantageous whenever $n \gg q$. We applied the elastic net to
  the following data sets:
  \begin{itemize}
    \setlength\itemsep{0em}
  \item {\texttt{Lung}} \cite{Mra07} with $n=$\,12,600 features and $m=203$ instances,
  \item {\texttt{arcene}} \cite{Guy05} with $n=$\,10,000 and $m=100$,
  \item {\texttt{GLI-85}} \cite{Zha10} with $n=$\,22,283 and $m=85$,
  \item {\texttt{MLL}} \cite{Mra07} with $n=$\,12,582 and $m=72$,
  \item {\texttt{Ovarian}} \cite{Pet02} with $n=$\,15,154 and $m=253$,
  \item {\texttt{SMK-CAN-187}} \cite{Zha10} with $n=$\,19,993 and $m=187$,
  \item {\texttt{14-cancer}} \cite{Has09} with $n=$\,16,063 and $m=198$.
  \end{itemize}
  The data sets have been scaled such that the response is centered and the
  predictors are standardized:
  \[
    \sum_{i=1}^m b_i = 0, \quad \sum_{i=1}^m A_{i,j} = 0, \quad
    \sum_{i=1}^m A_{i,j}^2 = 1,
  \]
  for $j=1,\dots,n$. We use 70\% of the data for training and 30\%
  for testing. Table \ref{tbl:3} shows the approximation errors and
  the test data mean squared error (MSE) after one hour of
  runtime. Evidently the approximation error is smaller with vertex
  selection in all test cases, while the MSE is mostly unaffected by
  the chosen method.
  \begin{table}
    \centering
    \caption{Experimental data for Example \ref{ex:2}. Highlighted in green are the lower ones of the MSEs computed by the methods for every data set.}\label{tbl:3}
    \begin{tabular}{l*{4}{c}}
      \toprule
      \textbf{Data Set} & \textbf{VS} & \boldmath{$\varepsilon$} & \textbf{MSE}\\
      \midrule
      \cell{l}{{\texttt{Lung}}} & \cell{l}{\cmark\\\xmark} & \cell{r}{0.3017\\0.8549} & \cell{r}{0.3685\\\textcolor{ForestGreen}{0.3353}}\\
      \cell{l}{{\texttt{arcene}}} & \cell{l}{\cmark\\\xmark}  & \cell{r}{0.0179\\0.0296} & \cell{r}{0.1799\\\textcolor{ForestGreen}{0.1682}}\\
      \cell{l}{{\texttt{GLI-85}}} & \cell{l}{\cmark\\\xmark} & \cell{r}{0.0251\\0.0330} & \cell{r}{0.0915\\\textcolor{ForestGreen}{0.0823}}\\
      \cell{l}{{\texttt{MLL}}} & \cell{l}{\cmark\\\xmark} & \cell{r}{0.0161\\0.0340} & \cell{r}{\textcolor{ForestGreen}{0.0546}\\\textcolor{ForestGreen}{0.0546}}\\
      \cell{l}{{\texttt{Ovarian}}} & \cell{l}{\cmark\\\xmark} & \cell{r}{0.6380\\0.7442} & \cell{r}{\textcolor{ForestGreen}{0.0096}\\\textcolor{ForestGreen}{0.0096}}\\
      \cell{l}{{\texttt{SMK-CAN-187}}} & \cell{l}{\cmark\\\xmark} & \cell{r}{0.4126\\0.7774} & \cell{r}{\textcolor{ForestGreen}{0.1655}\\0.1659}\\
      \cell{l}{{\texttt{14-cancer}}} & \cell{l}{\cmark\\\xmark} & \cell{r}{4.4348\\11.0260} & \cell{r}{\textcolor{ForestGreen}{7.7194}\\7.7961}\\
      \bottomrule
    \end{tabular}
  \end{table}
\end{example}
\begin{example}[Planar truss design]\label{ex:3}
  In this example we discuss a problem from structural mechanics with
  non-differentiable objective function. We consider a planar truss
  that consists of two fixed supports and four free nodes which are
  connected by ten beams as depicted in Figure \ref{fig:truss}. The
  beams are assumed to have the same cross sectional area, density,
  and Young's modulus. Our aim is to distribute a net force $F$ among
  the four free nodes in such a way that the absolute displacement of
  each of these nodes is minimized. We set the following problem
  parameters:
  \begin{center}
    \begin{tabular}{lrl}
      \toprule
      beam length $\ell$ & $9000$ & mm \\
      beam radii & 25 & mm \\
      Young's modulus & 70,000 & N/mm$^2$ \\
      force $F$ & 150,000 & N \\
      \bottomrule
    \end{tabular}
  \end{center}
  For simplicity we assume a linear elasticity model. We have a total
  of eight variables, i.e. a horizontal and a vertical force in every
  free node, and four objectives, i.e. the maximum of the horizontal
  and vertical displacement of each free node. The relationship
  between the acting forces $p \in \R^8$ and the nodal displacements
  $d \in \R^8$ is given by
  \begin{equation}
    d = K^{-1} p,
  \end{equation}
  where $K \in \R^{8 \times 8}$ is called the structure stiffness
  matrix of the truss. $K$ depends on each beams length, radius, and
  rotation as well as the Young's modulus. For more insight from a
  mechanical viewpoint we refer the reader to the vast amount of
  literature on the design of trusses, such as
  \cite{Ben03,Rot17}. For the optimization we induce
  bounds on the tension and compression in each beam of 170
  N/mm$^2$. Altogether the problem can be posed as
  \begin{align*}
    &\min \begin{pmatrix} \max \left\{ \left\lvert d_{1,h}
        \right\rvert, \left\lvert d_{1,v} \right\rvert \right\} \\
      \vdots \\ \max \left\{ \left\lvert d_{4,h}
        \right\rvert, \left\lvert d_{4,v} \right\rvert \right\}\end{pmatrix} \; \text{w.r.t.} \; \leqslant
    \\
    &\; \text{s.t.} \; \left\{\begin{aligned}
        d &= K^{-1} p\\
        d &= (d_{i,h},d_{i,v})_{i=1,\dots,4}^\T\\
        e^\T p &= F \\
        -170 &\leqslant Td \leqslant 170
      \end{aligned}\right.
  \end{align*}
  where $d_{i,h}$, $d_{i,v} \in \R$ denote the horizontal and vertical
  displacements of node $i$, respectively, $e \in \R^{8}$ is the
  vector of all ones, and $T \in \R^{10 \times 8}$ is a matrix
  relating the nodal displacements to the stress in the beams. Note
  that the problem can also be formulated as a vector linear
  program. The computational results are reported in Table
  \ref{tbl:4}. As in the previous examples, a smaller solution set is
  computed with VS. In a practical sense this eases a decision makers
  choice, particularly because individual minimizers may be very
  different from each other, see Figure \ref{fig:truss}.
  \begin{figure}
    \centering
    \includegraphics{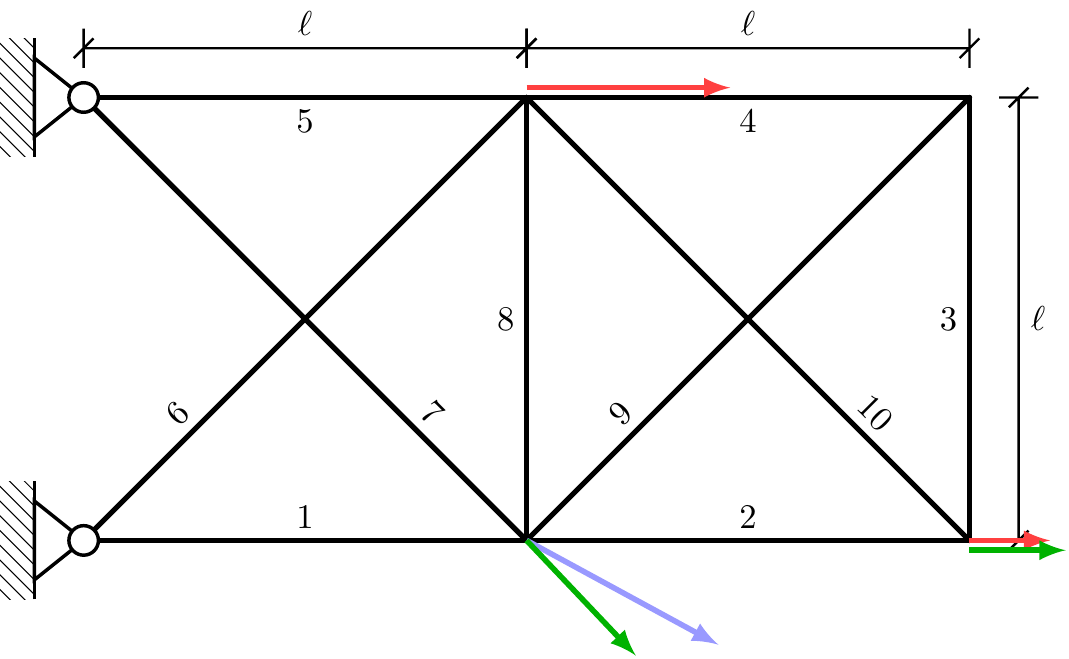}    
    \decoRule
    \caption{\label{fig:truss}%
      The planar 10-member truss from Example \ref{ex:3} with two
      fixed supports and four free nodes. The colored arrows
      illustrate different loads corresponding to weak minimizers.
    }
  \end{figure}
  \begin{table}
    \centering
    \caption{Results for Example \ref{ex:3}.}\label{tbl:4}
    \begin{tabular}{crc}
      \toprule
      \boldmath{$\varepsilon$} & \cell{cc}{\multicolumn{2}{c}{\textbf{time}} \\
      \textbf{VS} \cmark & \textbf{VS} \xmark} &
                                                 \cell{ll}{\multicolumn{2}{c}{\boldmath{$\lvert \mathcal{X}
                                                 \rvert$}} \\ \textbf{VS} \cmark & \textbf{VS} \xmark}
      \\
      \midrule
      0.5 & \cell{rr}{10.51 & 31.52} & \cell{lr}{25\;\;\; & \;\;\;88} \\
      0.4 & \cell{rr}{12.28 & 35.00} & \cell{lr}{28\;\;\; & \;\;\;95} \\
      0.3 & \cell{rr}{12.80 & 37.13} & \cell{lr}{29\;\; & \;\;103} \\
      0.2 & \cell{rr}{20.71 & 40.83} & \cell{lr}{43\;\; & \;\;110} \\
      \bottomrule
    \end{tabular}
  \end{table}  
\end{example}

\section{Conclusion}
We have proposed vertex selection, a new update rule for polyhedral
approximations in Benson-type algorithms for VCPs. We have shown that
VS can be performed efficiently. Moreover, the approximation error is
known in every iteration of the algorithm and in the provided examples
fewer scalarizations need to be solved. Hence one obtains coarser
solutions of VCPs with the same approximation quality while saving
computation time.

%
\bibliographystyle{bib/tfs}
\bibliography{bib/references}

\end{document}